\newtheorem{theorem}{Theorem}[section]
\newtheorem{lemma}[theorem]{Lemma}
\newtheorem{proposition}[theorem]{Proposition}
\newtheorem{corollary}[theorem]{Corollary}
\newtheorem{claim}{Claim}
\newtheoremstyle{definition}
  {6pt}
  {6pt}
  {}
  {}
  {\bfseries}
  {.}
  {.5em}
  {}%
\theoremstyle{definition}
\newtheoremstyle{remark}
  {6pt}
  {6pt}
  {}
  {}
  {\bfseries}
  {.}
  {.5em}
  {}%
\theoremstyle{remark}
\newtheorem{remark}[theorem]{Remark}
\newtheoremstyle{example}
  {6pt}
  {6pt}
  {}
  {}
  {\bfseries}
  {.}
  {.5em}
  {}%
\theoremstyle{example}
\newtheorem{example}[theorem]{Example}
\renewcommand\@makefntext[1]{%
\setlength\parindent{1em}%
\noindent
\makebox[1.8em][r]{}{#1}}
\DeclareMathOperator{\depth}{depth}
\DeclareMathOperator{\core}{core}
\DeclareMathOperator{\link}{link}
\DeclareMathOperator{\st}{star}
\DeclareMathOperator{\rank}{rank}
\title{On the Gorensteinness of broken circuit complexes and Orlik--Terao ideals}
\author{Dinh Van Le}
\address{Universit\"at Osnabr\"uck, Institut f\"ur Mathematik, 49069 Osnabr\"uck, Germany}
\email{dlevan@uos.de}
\begin{document}

\begin{abstract}
 It is proved that the broken circuit complex of an ordered matroid is Gorenstein if and only if it is a complete intersection. Several characterizations for a matroid that admits such an order are then given, with particular interest in the $h$-vector of broken circuit complexes of the matroid. As an application, we prove that the Orlik--Terao algebra of a hyperplane arrangement is Gorenstein if and only if it is a complete intersection. Interestingly, our result shows that the complete intersection property (and hence the Gorensteinness as well) of the Orlik--Terao algebra can be determined from the last two nonzero entries of its $h$-vector.
\end{abstract}

\maketitle
 \section{Introduction and main results}

The broken circuit complex was introduced for a graph by Wilf \cite{W} based on the idea of Whitney \cite{Wh}. This notion then was extended to matroids by Brylawski \cite{Br} and has been studied by various authors; see, e.g.,  \cite{B,BZ,BrOx}. The broken circuit complex is important because of both of its combinatorial and algebraic aspects. On the one hand, the entries of its $f$-vector coincide with the coefficients of the Poincar\'{e} polynomial of the matroid \cite{B}. On the other hand, the broken circuit complex defines two algebras which are deformations of two important algebras arising in the theory of hyperplane arrangements: the Orlik--Solomon algebra and the Orlik--Terao algebra; see \cite{B,PS}.

A well-known property of the broken circuit complex is that it  is shellable \cite{Pr}. It follows, in particular, that the Stanley--Reisner ring of the broken circuit complex and the Orlik--Terao algebra are Cohen--Macaulay.  Natural questions then arise: when are these algebras complete intersections? or Gorenstein? Characterizations for the complete intersection property of these algebras were obtained in \cite{LR}; see also \cite{DGT} for related results. A partial answer to the Gorensteinness of the broken circuit complex is also given in \cite{LR}, in which it is shown that Gorenstein broken circuit complexes of codimension 3 are complete intersections. However, a complete answer to this question seems, especially for the Orlik--Terao algebra, much more complicated.

\footnotetext{
\begin{itemize}
\item[ ]
{\it Mathematics  Subject  Classification} (2010): 05B35, 05E40, 05E45, 13F55, 13H10, 52C35.
\item[ ]
{\it Key words and phrases}: broken circuit complex, complete intersection, Gorenstein, hyperplane arrangement, matroid, Orlik--Terao algebra.
\end{itemize}
}

The aim of this paper is to investigate the Gorensteinness of the Stanley--Reisner ring of the broken circuit complex and the Orlik--Terao algebra. Quite surprisingly, we are able to show, among other things, that these algebras are Gorenstein exactly when they are complete intersections, thus giving a satisfactory complete answer to the `Gorenstein' question mentioned above. More precisely, for the broken circuit complex, we obtain the following:

\begin{theorem} \label{th11}
Let $\Delta:=BC(M,<)$ be the broken circuit complex of an ordered matroid $(M,<)$. Let $K$ be an arbitrary field. Then the following conditions are equivalent:
\begin{enumerate}
\item
 $\Delta$ is Gorenstein over $K$;
\item
 $\Delta$ is locally Gorenstein over $K$;
\item
 for any face $F$ with $\dim\link_\Delta F=1$, $\link_\Delta F$ is Gorenstein over $K$;
\item
 for any face $F$ with $\dim\link_\Delta F=1$, $\link_\Delta F$ is a complete intersection complex;
\item
 $\Delta$ is a locally complete intersection complex;
\item
$\Delta$ is a complete intersection complex.
\end{enumerate}
\end{theorem}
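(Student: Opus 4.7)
The plan is to prove the cycle of implications
\[
(\text{vi}) \Rightarrow (\text{i}) \Rightarrow (\text{ii}) \Rightarrow (\text{iii}) \Rightarrow (\text{iv}) \Rightarrow (\text{v}) \Rightarrow (\text{vi}).
\]
The opening stretch is essentially formal. A Stanley--Reisner ring that is a (monomial) complete intersection is Gorenstein, which gives (vi) $\Rightarrow$ (i). Gorensteinness of a Stanley--Reisner ring passes to links of faces, giving (i) $\Rightarrow$ (ii), while (ii) $\Rightarrow$ (iii) and (v) $\Rightarrow$ (iv) are simply the restrictions of the quantifier to faces whose link has dimension~$1$.

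For (iii) $\Rightarrow$ (iv), I would use the fact that every link in a broken circuit complex is again the broken circuit complex of a smaller ordered matroid, obtained by a suitable contraction with the inherited order. When $\dim\link_\Delta F = 1$, the link is therefore the BC complex of a rank-$2$ ordered matroid. A $1$-dimensional Gorenstein simplicial complex over $K$ must be either a cycle or a pair of isolated vertices; combining this dichotomy with the explicit shape of broken circuits in rank~$2$, one can enumerate the possible $1$-dimensional Gorenstein BC complexes and verify directly that each is already a complete intersection.

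The two remaining implications carry the real content. For (iv) $\Rightarrow$ (v), I would induct on $\dim\link_\Delta F$: since a link of a link in a BC complex is again a link in the same BC complex, hypothesis (iv) is inherited by every link, and the inductive step reduces the vertex link to a BC complex of strictly smaller rank whose codimension-two links are CI, hence (by induction on the equivalence itself) that vertex link is CI. For (v) $\Rightarrow$ (vi), the plan is to invoke the known characterization from \cite{LR} of CI broken circuit complexes in terms of a disjointness property of the (truncated) broken circuits, and to show that local CI at every vertex upgrades to this global disjointness. Roughly: if two broken circuits shared a vertex $v$, their restrictions to the link of $v$ would still overlap, contradicting the CI property of that link.

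The main obstacle I expect is precisely (v) $\Rightarrow$ (vi). In general, local regularity of the generators of a monomial ideal does not imply that they form a regular sequence globally; here the argument must exploit the specific combinatorial role of the matroid minimum inside each broken circuit in order to bootstrap pairwise ``local disjointness'' at every vertex into a genuine global complete intersection. The inductive structure of the proof also requires being precise that every link in a BC complex is again of the same type with a canonically inherited order, so that the hypotheses propagate cleanly down the dimension.
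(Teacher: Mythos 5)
Your proposal contains a foundational error that undermines two of the three nontrivial steps, and it leaves the third explicitly incomplete.

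The claim that ``every link in a broken circuit complex is again the broken circuit complex of a smaller ordered matroid'' is false. Every nonempty broken circuit complex is a cone over its reduced version (with apex the smallest element of the ground set), so it can never be an $n$-gon. Yet the paper's Lemma~\ref{lm34} must work precisely to rule out links that \emph{are} $n$-gons with $n\geq 5$ --- links in a BC complex genuinely can be $3$-gons or $4$-gons, and these are not cones, hence not BC complexes. Once this is corrected, your treatment of (iii)$\Rightarrow$(iv) collapses: enumerating BC complexes of rank-$2$ matroids does not exhaust the possible one-dimensional links, and your dichotomy for one-dimensional Gorenstein complexes (``a cycle or a pair of isolated vertices'') is also off --- by Hochster's criterion (Lemma~\ref{lm31}) the possibilities are $n$-gons and paths on at most three vertices. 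The actual content here is exactly to bound the $n$-gon case to $n\leq 4$, which the paper does via the matroid exchange property (Lemma~\ref{lm33} and Lemma~\ref{lm34}); you do not address this.

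The same error invalidates (iv)$\Rightarrow$(v): you propose to apply the equivalence inductively to vertex links, but since a vertex link is not a BC complex the induction hypothesis simply does not apply to it. Finally, for (v)$\Rightarrow$(vi) you correctly flag the obstacle but offer only a sketch and no resolution. The paper avoids this difficulty entirely: Terai--Yoshida's result (Lemma~\ref{lm32}, from \cite{TY}) gives (iv)$\Leftrightarrow$(v)$\Leftrightarrow$(vi) immediately for \emph{any} $(S_2)$ complex of dimension $\geq 2$, with no matroid input whatsoever. The matroid structure is needed only for (iii)$\Rightarrow$(iv). So the proof architecture you propose misplaces the difficulty: the bottleneck is not upgrading local CI to global CI, but passing from Gorenstein links to CI links, and the key tool you are missing is the $(S_2)$-based local-to-global machinery of \cite{TY}.
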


The Gorensteinness of the Orlik--Terao algebra is equivalent to the symmetry of its $h$-vector \cite[Theorem 4.4]{S2}, and thus equivalent to the symmetry of the $h$-vector of a broken circuit complex associated to that Orlik--Terao algebra. For this latter property, we have the following characterizations:

\begin{theorem}\label{th12}
 Let $M$ be a simple matroid. Let $(h_0,\ldots,h_{s})$ be the $h$-vector of broken circuit complexes of $M$, where $h_{s}\ne 0$. Then the following conditions are equivalent:
\begin{enumerate}
 \item There exists an ordering $<$ of the ground set of $M$ such that the broken circuit complex $BC(M,<)$ is a complete intersection;
 \item the $h$-vector $(h_0,\ldots,h_{s})$ satisfies the Dehn--Sommerville equations: $h_i=h_{s-i}$ for $i=0,\ldots,s$;
 \item $h_0=h_{s}$ and $h_1=h_{s-1}$;
 \item each connected component of $M$ is either a coloop or isomorphic to an iterated parallel connection of uniform matroids of the form $U_{m,m+1}$ ($m\geq 2$).
\end{enumerate}
\end{theorem}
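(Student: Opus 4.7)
I would argue cyclically (iv) $\Rightarrow$ (i) $\Rightarrow$ (ii) $\Rightarrow$ (iii) $\Rightarrow$ (iv).

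The implication (i) $\Rightarrow$ (ii) is standard: a complete intersection Stanley--Reisner ring is Gorenstein, and a Gorenstein simplicial complex has symmetric $h$-vector by Stanley's theorem. Since the $h$-vector of $BC(M,<)$ is known to be an invariant of $M$ alone (it records the reduced Whitney numbers of $M$), this symmetry is independent of the chosen ordering. The implication (ii) $\Rightarrow$ (iii) is trivial.

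For (iv) $\Rightarrow$ (i), I argue constructively. Direct summing $M$ with a coloop corresponds to joining $BC(M,<)$ with a single vertex, which preserves the complete intersection property, so it suffices to treat one non-coloop component $M'$, an iterated parallel connection of $U_{m_1, m_1+1}, \ldots, U_{m_k, m_k+1}$. I would choose an ordering on the ground set that places all basepoints of the iterated parallel connection first, so that the minimum of each constituent circuit is a basepoint. The broken circuits obtained from the individual constituent circuits then have pairwise disjoint support (distinct constituents share only basepoints, which are removed as minima), hence form a regular sequence of monomials. Every other circuit of $M'$ is a ``combined'' circuit---a union of several constituent circuits with some basepoints removed---whose broken circuit is divisible by the broken circuit of at least one constituent and is thus a non-minimal Stanley--Reisner generator. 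Therefore the Stanley--Reisner ideal of $BC(M', <)$ is minimally generated by $k$ pairwise variable-disjoint squarefree monomials, so $BC(M', <)$ is a complete intersection.

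The main content is (iii) $\Rightarrow$ (iv). Because $h_0=1$ for any Cohen--Macaulay complex, the condition $h_0=h_s$ is just $h_s=1$. The $h$-polynomial factors over connected components of $M$, and the top coefficient from each component $M_i$ equals Crapo's beta-invariant $\beta(M_i)$; thus $h_s = \prod_i \beta(M_i) = 1$ forces $\beta(M_i) = 1$ for each component. By Brylawski's classical structure theorem, each non-coloop component of $M$ is therefore a series-parallel matroid of rank at least $2$. It remains to use the second condition $h_1 = h_{s-1}$ to eliminate every simple series-parallel matroid outside the class of iterated parallel connections of $U_{m,m+1}$'s---for instance, the cycle matroid of a theta graph with three internally disjoint paths of length at least two, which is series-parallel but not an iterated parallel connection. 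Since $h_1 = n-r$ is the corank of $M$ and $h_{s-1}$ admits a combinatorial interpretation as the second coefficient of the reduced characteristic polynomial (essentially a weighted count of rank-$2$ flats), I would proceed by induction on the series-parallel decomposition of each component, showing that a series \emph{extension} within a single $U_{m,m+1}$ merely promotes it to $U_{m+1,m+2}$ and preserves (iv), while any genuine series \emph{connection} gluing two nontrivial pieces strictly inflates $h_{s-1}$ above $h_1$. Producing a sufficiently sharp combinatorial formula for $h_{s-1}$ on a general simple series-parallel matroid to force this strict inequality is the main obstacle of the argument.
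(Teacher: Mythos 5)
Your reduction via $\beta=1$ is correct, and the easy implications (i)$\Rightarrow$(ii)$\Rightarrow$(iii) are handled properly, but both of the substantive directions have gaps.

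\textbf{(iv)$\Rightarrow$(i).} The claim that ``placing all basepoints first'' makes the minimal broken circuits of the constituent circuits pairwise disjoint is not justified, because only \emph{one} element (the minimum) is removed from each constituent circuit, while a constituent circuit can contain \emph{several} basepoints. Concretely, if $M$ is built so that a constituent circuit $C$ meets two basepoints $p<q$, then $bc(C)$ still contains $q$; if another constituent circuit $C'$ through $q$ also has a second basepoint less than $q$, then $bc(C')$ contains $q$ too and $bc(C)\cap bc(C')\ne\emptyset$. Thus ``basepoints first'' alone does not suffice; one also needs the basepoints ordered compatibly with the order in which the parallel connections are iterated, so that the minimum of each constituent circuit is precisely the basepoint joining it to the previously assembled part. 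The paper achieves exactly this by building the order inductively: when forming $P(M',U_{m,m+1})$ at $e$, it extends the order on $M'$ by making $e=\min_\prec(U_{m,m+1})$, and then appeals to the criterion of \cite[Theorem 4.1]{LR} that complete intersection is equivalent to disjointness of the minimal broken circuits.

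\textbf{(iii)$\Rightarrow$(iv).} You correctly observe that $h_0=h_s=1$ forces $\beta$ of each component to be $1$ and hence each non-coloop component to be series-parallel, and you correctly identify $h_1=h_{s-1}$ as the remaining constraint. But the proposed dichotomy --- ``series extension'' versus ``genuine series connection strictly inflates $h_{s-1}$'' --- does not hold as stated: for instance $S(U_{2,3},U_{2,3})\cong U_{3,4}$, a genuine series connection of two nontrivial pieces which nevertheless lies in class (iv). You also acknowledge you lack the combinatorial formula needed to close the argument. The paper takes a different, and complete, route. It first proves Lemma 4.1, which shows that condition (iii) passes between $M$ and the factors of any direct sum or parallel connection decomposition; this relies crucially on Proposition 2.1(v), the Stanley inequalities $\sum_{j\le i}h_j\le\sum_{j\le i}h_{s-j}$ for representable matroids, together with the observation that $\beta=1$ forces the factors to be graphic (hence representable). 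That reduces (iii)$\Rightarrow$(iv) to showing: a simple, connected, parallel irreducible matroid satisfying (iii) is $U_{r,r+1}$. This is done by induction on $r$, contracting an edge $e_1$ at a degree-$2$ vertex of the underlying series-parallel graph and verifying through a sequence of claims (connectedness, $h''_{r-2}=1$, $h''_1=h''_{r-3}$, parallel irreducibility) that $\overline{M/e_1}$ satisfies the induction hypothesis --- again using Proposition 2.1(v) inside those claims. This careful deletion-contraction induction on parallel irreducible matroids is the missing engine in your proposal.
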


From this, one easily gets:

\begin{theorem}\label{th13}
 Let $\mathcal{A}$ be an essential central hyperplane arrangement. Let $\mathbf{C}(\mathcal{A})$ be the Orlik--Terao algebra of $\mathcal{A}$. Then the following conditions are equivalent:
\begin{enumerate}
 \item $\mathbf{C}(\mathcal{A})$ is a complete intersection;
 \item $\mathbf{C}(\mathcal{A})$ is Gorenstein;
 \item $M(\mathcal{A})$ satisfies one of the equivalent conditions in Theorem \ref{th12}, where $M(\mathcal{A})$ is the underlying matroid of $\mathcal{A}$.
\end{enumerate}
\end{theorem}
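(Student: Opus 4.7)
The plan is to deduce Theorem \ref{th13} from Theorem \ref{th12} by invoking two well-established bridges between the Orlik--Terao algebra $\mathbf{C}(\mathcal{A})$ and the Stanley--Reisner ring of a broken circuit complex $BC(M(\mathcal{A}),<)$. First, the two rings share the same $h$-vector, a matroid invariant of $M(\mathcal{A})$ independent of the chosen order $<$; this is implicit in \cite{B,PS} and is exactly what makes the $h$-vector appearing in Theorem \ref{th12} well defined. Second, $\mathbf{C}(\mathcal{A})$ is a complete intersection if and only if there exists some ordering $<$ for which $BC(M(\mathcal{A}),<)$ is a complete intersection, which is the criterion established in \cite{LR}. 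Since $\mathcal{A}$ is essential and central, its underlying matroid $M(\mathcal{A})$ is simple, so Theorem \ref{th12} applies directly.

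The implication (i)$\Rightarrow$(ii) is the standard fact that every complete intersection is Gorenstein. For (ii)$\Rightarrow$(iii), I would appeal to the criterion of \cite[Theorem~4.4]{S2} quoted just above Theorem \ref{th12}: since $\mathbf{C}(\mathcal{A})$ is a standard graded Cohen--Macaulay $K$-algebra, Gorensteinness is equivalent to symmetry of its $h$-vector. Via the first bridge this symmetry transfers to the $h$-vector of $BC(M(\mathcal{A}),<)$, which is condition (ii) of Theorem \ref{th12}; consequently $M(\mathcal{A})$ satisfies the equivalent conditions of that theorem, and condition (iii) of the present theorem holds.

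For (iii)$\Rightarrow$(i), Theorem \ref{th12} produces an ordering $<$ such that $BC(M(\mathcal{A}),<)$ is a complete intersection, and the second bridge then delivers a complete intersection presentation of $\mathbf{C}(\mathcal{A})$, closing the loop. The substantive combinatorial content of Theorem \ref{th13} is thus carried entirely by Theorem \ref{th12}; the main (and essentially only) obstacle is to check that the two bridges apply in the stated generality of essential central arrangements over an arbitrary field $K$. This is a routine consequence of the description of $\mathbf{C}(\mathcal{A})$ as a deformation of the Stanley--Reisner ring of $BC(M(\mathcal{A}),<)$, with the essentiality hypothesis ensuring that no degenerate situations (such as loops in $M(\mathcal{A})$) arise.
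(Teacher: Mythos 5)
Your proof is correct and follows essentially the same route as the paper: (i)$\Rightarrow$(ii) is standard, (ii)$\Rightarrow$(iii) uses symmetry of the $h$-vector of a Gorenstein graded Cohen--Macaulay algebra together with the fact that $\mathbf{C}(\mathcal{A})$ and $BC(M(\mathcal{A}),<)$ have the same $h$-vector, and (iii)$\Rightarrow$(i) uses that the Stanley--Reisner ideal of the broken circuit complex is an initial ideal of the Orlik--Terao ideal (Proudfoot--Speyer), so a complete intersection initial ideal lifts to a complete intersection. Two small imprecisions worth noting: the ``if and only if'' bridge you attribute to \cite{LR} is stronger than what is needed (and the ``only if'' direction is in effect a consequence of the present theorem, not something you should treat as a prior black box) --- all that is used here is the easy initial-ideal direction; and $M(\mathcal{A})$ is simple simply because the hyperplanes of an arrangement are distinct proper hyperplanes, not because $\mathcal{A}$ is essential.
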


Note that for an essential central hyperplane arrangement, the first two entries of the $h$-vector of its Orlik--Terao algebra are fixed: $h_0=1$ and $h_1=n-r$, where $n$ is the number of hyperplanes and $r$ is the rank of the arrangement. Therefore, Theorem \ref{th13} implies an interesting consequence: the Gorensteinness (and hence the complete intersection property) of the Orlik--Terao algebra depends only on the last two nonzero entries of its $h$-vector.

 The paper is divided into 4 sections. In Section 2 we recall some notions and basis facts about the Orlik--Terao algebra, the broken circuit complex, and series-parallel networks. Section 3 contains the proof of Theorem \ref{th11} and related results. Section 4 is devoted to the proofs of Theorem \ref{th12} and Theorem \ref{th13}.

\noindent{\bf Acknowledgments.} The author would like to thank Professor Tim R\"{o}mer for raising his interest in the topic of the paper and for valuable discussions. He also thanks the referees for useful suggestions.

\section{Preliminaries}

 In this section we collect several notions and properties concerning simplicial complexes, the Orlik--Terao algebra, matroids, the broken circuit complex, and series-parallel networks which will be used later. The reader is referred to \cite{OT,Ox,S} for unexplained terminology.

\subsection{Simplicial complexes}

Let $\Delta$ be a simplicial complex with vertices $[n]=\{1,\ldots,n\}$. Let $K$ be a field and $S=K[x_1,\ldots,x_n]$ a polynomial ring over $K$. The \emph{Stanley--Reisner ideal} $I_\Delta$ of $\Delta$ (over $K$) is the ideal in $S$ generated by all monomial $x_{i_1}\cdots x_{i_p}$ such that $\{i_1,\ldots,i_p\}\not\in\Delta.$ The $K$-algebra $K[\Delta]=S/I_\Delta$ is then called the \emph{Stanley--Reisner ring} of $\Delta$. For a face $F\in\Delta$ and a subset $W\subseteq [n]$ we define
$$\begin{aligned}
  \st_\Delta F&=\{G\in\Delta:F\cup G\in\Delta\},\\
\link_\Delta F&=\{G\in\Delta:F\cup G\in\Delta,F\cap G=\emptyset\},\\
\Delta_W&=\{G\in\Delta: G\subseteq W\}.
\end{aligned}$$
Moreover, we put $\core \Delta=\Delta_{\core [n]}$, where $\core [n]=\{i\in [n]: \st_\Delta\{i\}\ne \Delta\}$.

Let us say that $\Delta$ is \emph{Cohen--Macaulay} (resp. \emph{Gorenstein}, a \emph{complete intersection}) over $K$ if so is the ring $K[\Delta]$. We then define $\Delta$ to be \emph{locally Gorenstein} over $K$ (resp. a \emph{locally complete intersection}) if $\link_\Delta\{i\}$ is Gorenstein over $K$ (resp. a complete intersection) for $i=1,\ldots,n$.

Assume that $\dim \Delta=r-1$. Then the \emph{$f$-vector} of $\Delta$ is the $(r+1)$-tuple $(f_0,\ldots,f_{r})$, where $f_i$ is the number of faces of $\Delta$ of cardinality $i$. The $f$-vector is the most natural way to encode the combinatorics of a simplicial complex. However, in many cases it is more convenient to work with a related vector, the \emph{$h$-vector} $(h_0,\ldots,h_{r})$, defined as follows
$$h_i=\sum_{j=0}^i(-1)^{i-j}\binom{r-j}{i-j}f_j,\quad i=0,\ldots,r.$$
Equivalently, if we let $f(t)=\sum_{i=0}^rf_it^{r-i}$ be the $f$-polynomial and $h(t)=\sum_{i=0}^rh_it^{r-i}$ the $h$-polynomial of $\Delta$, then $f(t)=h(t+1)$. In this paper we usually consider the $h$-vector with zero entries at the end removed.

\subsection{The Orlik--Terao algebra}

Let $\mathcal{A}=\{H_1,\ldots,H_n\}$ be an essential central hyperplane arrangement in a vector space $V$ over a field $K$. The Orlik--Terao algebra of $\mathcal{A}$ introduced in \cite{OT2} is a commutative analog of the Orlik--Solomon algebra. Recently, it has gotten more attention because it encodes subtle information which is missed in the Orlik--Solomon algebra; see \cite{DGT, LR, Sc, Sc2, ST,T} for more details.

Let $\alpha_i\in K[V]$ be linear forms such that $\ker \alpha_i=H_i$ for $i=1,\ldots,n$. Then the \emph{Orlik--Terao algebra} $\mathbf{C}(\mathcal{A})$ of $\mathcal{A}$ is the subalgebra of $K(V)$ generated by reciprocals of the linear forms $\alpha_i$, that is, $\mathbf{C}(\mathcal{A})=K[1/\alpha_1,\ldots,1/\alpha_n]$. Let $S={K}[x_1,\ldots,x_n]$ be the polynomial ring in $n$ variables. The kernel of the surjection
$$S\rightarrow\mathbf{C}(\mathcal{A}),\ x_i\mapsto 1/\alpha_i\ \text{ for }\ i=1,\ldots,n$$
is called the \emph{Orlik--Terao ideal} of $\mathcal{A}$, denoted by $I(\mathcal{A})$. Thus, $\mathbf{C}(\mathcal{A})=S/I(\mathcal{A})$.

It is known that $\mathbf{C}(\mathcal{A})$ is a standard graded Cohen--Macaulay domain; see \cite[Proposition 2.1]{ST} and \cite[Theorem 4]{PS}. Therefore, it follows from a result due to Stanley \cite[Theorem 4.4]{S2} that the Gorensteinness of $\mathbf{C}(\mathcal{A})$ depends only on its Hilbert series, or equivalently on its $h$-vector. This allows one to use combinatorial tools to study the Gorensteinness of $\mathbf{C}(\mathcal{A})$: the $h$-vector of $\mathbf{C}(\mathcal{A})$ can be computed via a combinatorial object - the broken circuit complex of the underlying matroid of $\mathcal{A}$. Recall that the \emph{underlying matroid} $M(\mathcal{A})$ of $\mathcal{A}$ is the matroid on the ground set $\mathcal{A}$ whose independent sets are the independent subsets of $\mathcal{A}$. It was proved by Proudfoot and Speyer \cite[Theorem 4]{PS} that the Stanley--Reisner ideal of the broken circuit complex of $M(\mathcal{A})$ with respect to an arbitrary ordering of $\mathcal{A}$ (see below) is an initial
ideal of $I(\mathcal{A})$. Therefore, $\mathbf{C}(\mathcal{A})$ and that complex share the same $h$-vector.

\begin{remark}
 The above relation between the Orlik--Terao ideal $I(\mathcal{A})$ and the Stanley--Reisner ideal of the broken circuit complex of $M(\mathcal{A})$ was exploited in \cite[Proposition 2.4]{LR} to give a short proof for a formula essentially due to Terao \cite[Theorem 1.2]{T} which relates the Hilbert series of $\mathbf{C}(\mathcal{A})$ to the Poincar\'{e} polynomial of $\mathcal{A}$: Let $(f_0,\ldots,f_r)$ be the $f$-vector of the broken circuit complex of $M(\mathcal{A})$. We call $\pi(\mathcal{A},t)=\sum_{p=0}^rf_{i}t^i$ the \emph{Poincar\'{e} polynomial} of $\mathcal{A}$. Then one has the following formula for the Hilbert series of $\mathbf{C}(\mathcal{A})$:
$$H_{\mathbf{C}(\mathcal{A})}(t)=\pi\Big(\mathcal{A},\frac{t}{1-t}\Big).$$
\end{remark}

\subsection{Matroids}

A {\it matroid} $M$ on the ground set $E$ is a collection $\mathcal{I}$ of subsets of $E$, called \emph{independent sets}, satisfying the following conditions:
\begin{enumerate}
\item
$\emptyset\in\mathcal{I};$
\item
If $I\in \mathcal{I}$ and $I'\subseteq I$, then $I'\in\mathcal{I}$;
\item
If $I,I'\in \mathcal{I}$ and $|I'|<| I|$, then there is an element $e\in I-I'$ such that $I'\cup\{e\}\in\mathcal{I}.$
\end{enumerate}
Maximal independent sets of $M$ are called \emph{bases}. They have the same cardinality which is called the \emph{rank} of $M$. \emph{Dependent sets} are subsets of $E$ that are not in $\mathcal{I}$. Minimal dependent sets are called \emph{circuits}. Denote by $\mathcal{C}(M)$ the set of all circuits of $M$. Clearly,  $\mathcal{C}(M)$ determines $M$: $\mathcal{I}$ consists of subsets of $E$ that do not contain any member of $\mathcal{C}(M)$.

Let $e\in M$ (by abuse of notation, we also write $e\in M$ if $e\in E$). Then $e$ is a \emph{loop} if $\{e\}$ is a circuit of $M$. We call $e$ a \emph{coloop} if it is contained in every basis of $M$. Two elements $e,f\in M$ are \emph{parallel} if they form a circuit. A \emph{parallel class} of $M$ is a maximal subset of $E$ whose any two distinct members are parallel and whose no member is a loop. A parallel class is  \emph{non-trivial} if it contains at least two elements. The matroid $M$ is \emph{simple} if it has no loops and no non-trivial parallel classes. In general, one may associate to an arbitrary matroid $M$ a simple matroid $\overline{M}$, called the \emph{simplification} of $M$, by identifying all members of each parallel class and deleting all the loops from $M$.

The \emph{deletion} of $M$ at an element $e$, denoted $M-e$, is the matroid on the ground set $E-e$ whose independent sets are those members of $\mathcal{I}$ which do not contain $e$. The \emph{contraction} of $M$ at $e$, denoted $M/e$, is the matroid whose ground set is also $E-e$ and whose circuits consist of the minimal non-empty members of $\{C-e:C\in\mathcal{C}(M)\}$.

A typical example of a matroid is the underlying matroid of a central hyperplane arrangement introduced above. There are two other common examples which will be concerned in this paper.

\begin{example}\label{ex1}
(i) Let $m\leq n$ be non-negative integers and let $E$ be an $n$-element set. The \emph{uniform matroid} $U_{m,n}$ on $E$ is the matroid whose independent sets are the subsets of $E$ of cardinality at most $m$. This matroid has rank $m$ and its circuits are the $(m+1)$-element subsets of $E$. Thus in particular, $U_{m,n}$ is simple if and only if $m\geq 2$. Moreover, when $n=m+1$, the matroid $U_{m,m+1}$ has a unique circuit which is often denoted by $C_{m+1}$.

(ii) Let $G$ be a graph whose edge set is $E.$ Let $\mathcal{C}$ be the set of edge sets of cycles of $G$. Then $\mathcal{C}$ forms the set of circuits of a matroid $M(G)$ on $E$. We call $M(G)$ the \emph{cycle matroid} (or \emph{polygon matroid}) of $G$. This matroid is simple if and only if $G$ is a simple graph.
\end{example}

Let $M_1$ and $M_2$ be two matroids on disjoint ground sets $E_1$ and $E_2$. We define their \emph{direct sum} $M_1\oplus M_2$ to be the matroid on the ground set $E_1\cup E_2$ whose independent sets are the unions of an independent set of $M_1$ and an independent set of $M_2$. A matroid is called \emph{separable} if it is the direct sum of two smaller matroids. A matroid is \emph{connected} if it is not separable. Every matroid has a unique decomposition as a direct sum $M=M_1\oplus\cdots\oplus M_k$ of connected matroids; we call $M_1,\ldots,M_k$ \emph{connected components} of $M$.

It is apparent from the definition that the collection of independent sets of a matroid $M$ forms a simplicial complex. We call it the \emph{matroid complex} (or \emph{independence complex}) of $M$. When $M$ has rank $r$, this complex is a pure complex of dimension $r-1$.

\subsection{The broken circuit complex}

Let $M$ be a matroid on the ground set $E$. Given a linear order $<$ on $E$, a \emph{broken circuit} is a circuit with its least element removed. For each circuit $C$ of $M$, the broken circuit corresponding to $C$ will be denoted by $bc(C)$. The \emph{broken circuit complex} of $M$ with respect to $<$ is the family of all subsets of $E$ that contain no broken circuit, denoted by $BC(M,<)$ (or $BC(M)$ when the order $<$ is specified). It is a subcomplex of the same dimension as the matroid complex of $M$. Note that the broken circuit complex of $M$ is isomorphic to that of its simplification \cite[Proposition 7.4.1]{B}, so one always may assume that the matroid $M$ is simple when working with its broken circuit complex.

Fix a linear order on $E$ and let $e_0$ be the smallest element of $E$. Then $BC(M)$ is the cone over the \emph{reduced broken circuit complex} $\overline{BC}(M)$ with apex $e_0$. Here, $\overline{BC}(M)$ is the family of all subsets of $E-e_0$ that do not contain any broken circuit. It is well-known that $BC(M)$ and  $\overline{BC}(M)$ are shellable complexes; see \cite{Pr} and also \cite[7.4]{B}. Hence, when $M$ has rank $r$, these complexes are Cohen--Macaulay of dimension $r-1$ and $r-2$, respectively. Clearly, $BC(M)$ is a Gorenstein (resp. complete intersection) complex if and only if so is $\overline{BC}(M)$.

Observe that the broken circuit complex $BC(M,<)$ depends on the given order $<$ in the sense that another order may yield a non-isomorphic broken circuit complex. However, its $h$-vector is independent of the choice of order; see \cite[7.4]{B}. We summarize some properties of the $h$-vector of the broken circuit complex in the next proposition.

\begin{proposition}\label{pr21}
 Assume that $M$ is a simple matroid of rank $r$ on an $n$-element ground set. Let $(h_0,h_1,\ldots,h_r)$ and $h_M(t)=h_0t^r+h_1t^{r-1}+\cdots+h_r$ be respectively the $h$-vector and the $h$-polynomial of a broken circuit complex of $M$. Then the following statements hold:
\begin{enumerate}
 \item $h_0=1,\ h_1=n-r,\ h_{r-1}=\beta(M)$, and $h_r=0$.

 \item If $M=M_1\oplus M_2$, then $h_M(t)=h_{M_1}(t)h_{M_2}(t)$.

 \item $M$ has $k$ connected components if and only if $k$ is the smallest number such that $h_{r-k}\ne 0$. In particular, $\beta(M)>0$ if and only if $M$ is connected.

 \item If $e\in M$ is not a coloop, then $h_M(t)=h_{M-e}(t)+h_{\overline{M/e}}(t)$, where $\overline{M/e}$ is the simplification of $M/e$. In particular, if $M$ is connected then either $M-e$ or $\overline{M/e}$ is connected. Moreover, if $\beta(M)=1$ then exactly one of $M-e$ and $\overline{M/e}$ is connected.

 \item Assume that $M$ is representable. Let $s$ be the largest index such that $h_s\ne 0$. Then $\sum_{j=0}^ih_j\leq\sum_{j=0}^ih_{s-j}$ for all $i=0,\ldots,s$.
\end{enumerate}
\end{proposition}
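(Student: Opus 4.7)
The proposition bundles together five standard facts about broken circuit complexes, and my plan is to treat each via a short argument or a direct appeal to the literature.

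For (i), $M$ being simple means no singleton is a broken circuit, so the vertex set has cardinality $n$ and $f_0=1,\ f_1=n$, giving $h_0=1$ and $h_1=n-r$. Since the smallest element $e_0\in E$ can never appear in a broken circuit, $BC(M,<)$ is the cone over $\overline{BC}(M)$ with apex $e_0$; a direct computation with $f(t)=h(t+1)$ shows that the $h$-polynomial of such a cone acquires a factor of the variable, forcing $h_r=0$. The identity $h_{r-1}=\beta(M)$ is Brylawski's classical formula and may be cited from \cite{B}. For (ii), I would order $E_1\cup E_2$ compatibly with the given orderings on $E_1$ and $E_2$; since every circuit of $M_1\oplus M_2$ lies entirely in one summand, the broken circuits of $M_1\oplus M_2$ are exactly those of $M_1$ together with those of $M_2$, so $BC(M_1\oplus M_2)$ is the simplicial join $BC(M_1)*BC(M_2)$, whose $h$-polynomial is the product of the factors. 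Part (iii) is then immediate: decomposing $M=M_1\oplus\cdots\oplus M_k$ into connected components, each $h_{M_i}(t)$ vanishes to order exactly $1$ at $t=0$ by (i) (the coefficient of $t$ being $\beta(M_i)>0$ by Crapo's theorem), so $h_M(t)=\prod_i h_{M_i}(t)$ vanishes to order exactly $k$.

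For (iv), I would prove the recursion $h_M(t)=h_{M-e}(t)+h_{\overline{M/e}}(t)$ by a direct deletion--contraction argument on $BC(M,<)$, identifying the subcomplex of faces not containing $e$ with $BC(M-e,<)$ and, after relabeling, the link of $e$ with $BC(\overline{M/e})$; this Tutte-style recursion goes back to Brylawski. The connectedness consequences are then read off by tracking orders of vanishing at $t=0$ via (iii): when $M$ is connected, $h_M(t)$ vanishes to order exactly $1$, so at least one of $h_{M-e}$, $h_{\overline{M/e}}$ vanishes to order exactly $1$, i.e., the corresponding matroid is connected. If moreover $\beta(M)=1$, the coefficient of $t$ in $h_M$ equals $1$; if both $M-e$ and $\overline{M/e}$ were connected, that coefficient would be at least $\beta(M-e)+\beta(\overline{M/e})\geq 2$, a contradiction.

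The main obstacle is (v), the partial-sum inequality specific to representable matroids. Here my plan is to invoke the theorem of Swartz on $h$-vectors of broken circuit complexes of representable matroids, which realizes the $h$-vector as the Hilbert function of an Artinian reduction of $\mathbf{C}(\mathcal{A})$ and uses a Hard Lefschetz-type property of that Artinian graded ring to yield the claimed inequality. This step is the deepest by far; parts (i)--(iv) are essentially formal once one invokes Brylawski's $\beta$-formula and Crapo's characterization of connected matroids.
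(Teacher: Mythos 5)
Your arguments for (i)--(iv) are correct and give a more self-contained route than the paper takes. The paper simply notes that $h_M(t)=T_M(t,0)$, where $T_M$ is the Tutte polynomial, and then cites the catalogue of Tutte-polynomial properties in Brylawski--Oxley for all four items. You instead reprove these facts directly on the level of the complex: the cone structure for $h_r=0$, the join decomposition $BC(M_1\oplus M_2)=BC(M_1)*BC(M_2)$ for the multiplicativity, the order-of-vanishing argument for the connected-component count, and the deletion--contraction decomposition of $BC(M,<)$ for (iv). These are all sound and closely parallel the Tutte-polynomial proofs, so the content is the same; your version is just more explicit. (One small point of care in (iv): the identification of the link of $e$ with $BC(\overline{M/e})$ and the antistar with $BC(M-e)$ needs $e$ to be chosen as the largest element, which is harmless since the $h$-vector is order-independent; you allude to this but it should be made precise.)

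For (v), however, your attribution is wrong and the intended argument would not go through as described. There is no theorem of Swartz giving the partial-sum inequality for $h$-vectors of broken circuit complexes of representable matroids, and the inequality is not obtained from a Hard Lefschetz-type property. The paper's reference \cite{Sw} concerns $g$-elements for \emph{matroid complexes} (a related but distinct question, discussed in the paper's Remark 2.4 as an open problem). The actual source of the inequality is Stanley's theorem on the Hilbert function of a graded Cohen--Macaulay \emph{domain} \cite[Theorem 2.1]{S1}: if $(h_0,\dots,h_s)$ is the $h$-vector of a standard graded Cohen--Macaulay domain, then $\sum_{j=0}^i h_j\le\sum_{j=0}^i h_{s-j}$ for all $i$. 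The representability hypothesis enters precisely to supply a central arrangement $\mathcal{A}$ with $M=M(\mathcal{A})$; the Orlik--Terao algebra $\mathbf{C}(\mathcal{A})$ is a standard graded Cohen--Macaulay domain whose $h$-vector coincides with that of the broken circuit complex of $M$ by Proudfoot--Speyer \cite{PS}, and then Stanley's theorem applies. The domain property (rather than any Lefschetz property) is the crucial algebraic input, and it is available exactly because $\mathbf{C}(\mathcal{A})$ lives inside a field of rational functions. You should replace the Swartz/Lefschetz claim with this argument.
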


In the above proposition, $\beta(M)$ is the \emph{beta invariant} of $M$, defined by Crapo \cite{Cr}. The reader is referred to \cite{Za} for more information on this invariant. The case $\beta(M)=1$ will be discussed in the next subsection.

\begin{proof}[Proof of Proposition \ref{pr21}]
 Note that $h_M(t)=T_M(t,0)$, where $T_M(t,u)$ is the Tutte polynomial of $M$ \cite[p. 240]{B}. So (i)-(iv) can be deduced from the corresponding properties of the Tutte polynomial which are contained in \cite[6.2]{BrOx2}. To prove (v), let $\mathcal{A}$ be a central hyperplane arrangement such that $M=M(\mathcal{A})$. Then the $h$-vector of the broken circuit complex of $M$ coincides with the $h$-vector of the Orlik--Terao algebra $\mathbf{C}(\mathcal{A})$ of $\mathcal{A}$. Since $\mathbf{C}(\mathcal{A})$ is a standard graded Cohen--Macaulay domain, the result follows from \cite[Theorem 2.1]{S1}.
\end{proof}

\begin{remark}
 Regarding Proposition \ref{pr21}(v), a major open question in matroid theory asks whether the stronger inequalities $h_i\leq h_{s-i},$ $i=0,\ldots,\lfloor s/2\rfloor$, hold for arbitrary matroids; see \cite{Sw}. As far as we know, the question has not been answered even for the case $h_s=1$. In this context, it is natural to ask whether Proposition \ref{pr21}(v) holds without the assumption on the representability of the matroid. We personally expect an affirmative answer. But it would be much more fascinating if the answer was negative. Because in this case the answer to the above open question would also be negative and Proposition \ref{pr21}(v) would give a non-trivial necessary condition on the Tutte polynomial for the representability of matroids.
\end{remark}

\subsection{Series-parallel networks}

The notion of series-parallel networks has its origin in electrical network theory. We recall here several properties of series and parallel connections. A full treatment of this topic may be found in \cite{Br2} or \cite{Ox}.

Let $M_1,M_2$ be matroids on the ground sets $E_1,E_2$ with $E_1\cap E_2=\{e\}$, where $e$ is neither a loop nor a coloop in $M_1$ or $M_2$. Denote by $\mathcal{C}(M)$ the set of circuits of a matroid $M$. Then the \emph{series connection} $S(M_1,M_2)$ and the \emph{parallel connection} $P(M_1,M_2)$ of $M_1,M_2$ relative to $e$ are the matroids on the ground set $E_1\cup E_2$ whose sets of circuits are respectively:
$$\begin{aligned}
   \mathcal{C}(S(M_1,M_2))&=\mathcal{C}(M_1-e)\cup\mathcal{C}(M_2-e)\cup\{C_1\cup C_2: e\in C_i\in\mathcal{C}(M_i)\ \text{ for }\ i=1,2\},\\
\mathcal{C}(P(M_1,M_2))&=\mathcal{C}(M_1)\cup\mathcal{C}(M_2)\cup\{C_1\cup C_2-e: e\in C_i\in\mathcal{C}(M_i)\ \text{ for }\ i=1,2\}.
  \end{aligned}$$

The following properties of series and parallel connections are given in \cite[Proposition 4.6, Corollary 4.11, Theorem 6.16(v)]{Br2} and \cite[Proposition 2.3]{Br3}.

\begin{proposition}\label{pr22}
 Let $M,M'$ be matroids on the ground sets $E,E'$ with $E\cap E'=\{e\}$, where $e$ is neither a loop nor a coloop in $M$ or $M'$. Let $S(M,M')$ and $P(M,M')$ be the series and parallel connections of $M,M'$ relative to $e$, respectively. Then the following statements hold:
\begin{enumerate}
\item $P(M,M')$ is connected (resp. simple) if and only if both $M$ and $M'$ are connected (resp. simple).

\item $S(M,M')$ is simple if and only if $M$ and $M'$ are loopless, each has no $2$-element circuit which do not contain $e$, and each has at most one $2$-element circuit which contains $e$. In particular, if this is the case, then $S(M,M')$ has at most one $3$-element circuit which contains $e$.

\item $h_{P(M,M')}(t)=t^{-1}h_M(t)h_{M'}(t)$, where $h_M(t)$ is the $h$-polynomial of a broken circuit complex of $M$.

\item Assume $M$ is connected and $f\in M$. Then $M-f$ is separable if and only if $M$ is a series connection relative to $f$; $\overline{M/f}$ is separable if and only if $M$ is a parallel connection relative to $f$, where $\overline{M/f}$ is the simplification of $M/f$.

\item If $M$ is connected, then $M$ can be decomposed as an iterated parallel connection of parallel irreducible connected matroids.
\end{enumerate}
\end{proposition}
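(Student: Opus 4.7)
The plan is to handle each of the five parts separately, since they are essentially independent facts that follow from the explicit circuit descriptions of $S(M,M')$ and $P(M,M')$ stated just before the proposition. Throughout, I will use that $e$ is neither a loop nor a coloop in either matroid, so every circuit through $e$ has size at least $2$.

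For (i), I would check loops, $2$-element circuits, and connectedness of $P(M,M')$ directly from its circuit set. Any loop or $2$-circuit of the form $C_1\cup C_2-e$ forces $|C_1|+|C_2|\le 3$, which combined with $|C_i|\ge 2$ and $E\cap E'=\{e\}$ quickly rules such circuits out; hence loops and parallel pairs of $P(M,M')$ come only from $M$ or $M'$, giving the ``simple'' equivalence. For connectedness, I would use the characterization that a matroid is connected iff any two elements lie in a common circuit: joining circuits through $e$ produces circuits bridging the two sides, while a disconnection of $M$ or $M'$ away from $e$ propagates to $P(M,M')$. Part (ii) is handled in the same style for $S(M,M')$: a loop of $S(M,M')$ is necessarily a non-$e$ loop of $M$ or $M'$, and a $2$-circuit either lies in some $M_i-e$ or has the form $C_1\cup C_2$ with $e\in C_1,C_2$, which forces $|C_1|=|C_2|=2$, i.e.\ two $2$-circuits through $e$. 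Enumerating these cases yields the listed conditions, and the remark about the unique $3$-element circuit through $e$ follows from circuit elimination applied to any two such candidates.

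For (iii), my approach is to combine the identity $h_M(t)=T_M(t,0)$ noted in the proof of Proposition~\ref{pr21} with the classical Tutte-polynomial formula for the parallel connection, and then specialize at $y=0$; at this specialization the denominator collapses to $x$, producing the predicted factor $t^{-1}$. A more self-contained route is to argue directly on no-broken-circuit bases: fix an ordering on $E\cup E'$ in which $e$ is smallest, and set up a bijection between NBC-bases of $P(M,M')$ and pairs consisting of an NBC-basis of $M$ and an NBC-basis of $M'$ both containing $e$, glued along $e$. The shift by $t^{-1}$ then reflects that $e$ is counted once in $P(M,M')$ but twice in the product. I expect this to be the main technical obstacle, because one must verify that gluing preserves the no-broken-circuit condition across the join; the circuit description of $P(M,M')$ makes this plausible but requires careful bookkeeping.

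For (iv), the strategy is structural: if $M-f=N_1\oplus N_2$ with ground sets $E_1,E_2$, set $M_i=M|(E_i\cup\{f\})$, and use connectedness of $M$ together with the fact that every circuit of $M$ meeting both $E_1$ and $E_2$ must contain $f$ to match $\mathcal{C}(M)$ with the series-connection list; the converse is immediate from the circuit description. The parallel statement follows by a dual argument applied to the contraction $\overline{M/f}$. Finally, for (v) I would induct on $|E(M)|$: either $M$ is parallel-irreducible and the decomposition is trivial, or $M=P(M_1,M_2)$ with both $M_i$ connected by (i) and strictly smaller, so the inductive hypothesis applies. Across the whole proposition the Tutte-polynomial identity in (iii) is the main conceptual bottleneck; the other parts reduce to careful case analysis on the given circuit descriptions.
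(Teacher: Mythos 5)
The paper does not prove Proposition~\ref{pr22} at all: it cites Brylawski \cite[Proposition 4.6, Corollary 4.11, Theorem 6.16(v)]{Br2} and \cite[Proposition 2.3]{Br3} for all five parts. So there is no ``paper proof'' to compare against, and the relevant question is whether your outline reproduces those classical arguments correctly.

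On the whole it does, and your identification of (iii) as the real technical content is accurate; that is exactly why the paper reaches for Brylawski's Theorem 6.16(v), which is the Tutte-polynomial formula for series/parallel connections that your first route uses. A few caveats. In (i) the arithmetic is slightly off: for a $2$-element circuit of the form $C_1\cup C_2-e$ you need $|C_1|+|C_2|-2\le 2$, i.e.\ $|C_1|+|C_2|\le 4$, not $\le 3$; this still forces $|C_1|=|C_2|=2$ and the conclusion is unaffected, but as written the bound is wrong. Also the connectivity argument in (i) is stated too loosely (``propagates to $P(M,M')$''); the clean version is that two elements of $P(M,M')$ on the same side already share a circuit in $M$ or $M'$, while two on opposite sides can be joined through circuits containing $e$ because $e$ is neither a loop nor a coloop, and conversely a separation of $M$ avoiding $e$ lifts to a separation of $P(M,M')$. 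In (iii), the NBC-basis bijection you sketch as an alternative is indeed delicate; the bijection is naturally between NBC-bases of $P(M,M')$ and pairs of NBC-bases of $M$ and $M'$ each containing $e$ when $e$ is the smallest element, and the $t^{-1}$ reflects the one shared element, but you would have to check the broken-circuit condition across circuits of the glued form $C_1\cup C_2-e$, which is precisely the bookkeeping you flag. In (iv), the dual argument needs one more word: $S(M_1,M_2)^*=P(M_1^*,M_2^*)$ and $(M-f)^*=M^*/f$, so the deletion statement dualizes to a statement about $M^*/f$; to pass to $\overline{M/f}$ you should observe that simplification does not change separability. Part (v) by induction on $|E(M)|$ is fine, though one should also note the process terminates because parallel connection strictly reduces the number of elements on each side.

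Net assessment: your route is a genuinely self-contained replacement for the paper's citation-only treatment and is essentially correct as a plan, at the cost of having to (re)derive Brylawski's Tutte-polynomial factorization or do the NBC-basis bookkeeping for (iii), which is the part the paper outsources entirely.
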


A matroid $M$ is a \emph{series-parallel network} if it can be obtained from the $2$-element circuit $C_{2}$ by a sequence of operations each of which is either a series or a parallel connection. Some characterizations of series-parallel networks are provided in the next proposition. For the proof, see \cite[Theorem 7.6]{Br2} and \cite[Theorem 5.4.10, Corollary 11.2.15]{Ox}.

\begin{proposition}\label{pr23}
 Let $M$ be a connected matroid on a ground set of two or more elements. Then the following statements are equivalent:
\begin{enumerate}
 \item $M$ is a series-parallel network;

 \item $\beta(M)=1$;

 \item $M$ is the cycle matroid of a block $G$ having no subgraph that is a subdivision of the complete graph $K_4$.
\end{enumerate}
\end{proposition}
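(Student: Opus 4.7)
My plan is to prove the matroidal equivalence (i) $\Leftrightarrow$ (ii) as a self-contained induction, and then reduce (i) $\Leftrightarrow$ (iii) to a classical graph-theoretic theorem of Duffin. The key technical input for the matroidal part is a multiplicativity formula $\beta(S(M_1,M_2)) = \beta(P(M_1,M_2)) = \beta(M_1)\beta(M_2)$, which I would derive from Proposition~\ref{pr22}(iii) together with $\beta(M) = h_{r-1}$ from Proposition~\ref{pr21}(i) (for the parallel case; the series case follows by matroid duality or an analogous $h$-polynomial identity).

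For (i) $\Rightarrow$ (ii), I would induct on the number of series/parallel operations used to construct $M$. The base case $M = C_2$ gives $\beta(C_2) = 1$ by direct computation. The inductive step is immediate from the multiplicativity formula above. For (ii) $\Rightarrow$ (i), I would induct on $|E(M)|$. If $|E(M)| = 2$, connectedness forces $M = C_2$. Otherwise, pick any $e \in E(M)$; since $M$ is connected with at least three elements, $e$ is neither a loop nor a coloop. By Proposition~\ref{pr21}(iv), exactly one of $M-e$ and $\overline{M/e}$ is connected, so exactly one is separable. Proposition~\ref{pr22}(iv) then exhibits $M$ as either a series or a parallel connection $M = \ast(M_1, M_2)$ of two smaller connected matroids along $e$. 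The multiplicativity of $\beta$ forces $\beta(M_1) = \beta(M_2) = 1$, so the induction hypothesis applies to both factors and $M$ is a series-parallel network.

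For (i) $\Leftrightarrow$ (iii), I would translate between matroid and graph operations: the cycle matroid of an $n$-cycle is $C_n$, subdividing an edge of a graph corresponds to a series connection of cycle matroids along that edge, and doubling an edge corresponds to a parallel connection. Consequently, $M$ is a series-parallel network if and only if $M = M(G)$ for some block $G$ obtained from a single edge by iterated subdivisions and edge-doublings. I would then invoke Duffin's theorem: these are precisely the blocks containing no subdivision of $K_4$. Duffin's theorem itself I would prove by induction on $|E(G)|$, using that a 2-connected graph with no $K_4$-subdivision must contain either a vertex of degree two (enabling a series reduction) or a pair of parallel edges (enabling a parallel reduction), while the converse—that series/parallel reductions cannot create a $K_4$-subdivision—is straightforward.

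The main obstacle is the structural step inside Duffin's theorem: showing that every 2-connected graph with no $K_4$-subdivision admits a reduction. The matroidal implications (i) $\Leftrightarrow$ (ii) flow smoothly from Propositions~\ref{pr21} and \ref{pr22} once the $\beta$-multiplicativity is in hand, but ruling out ``irreducible'' obstructions on the graph side requires a careful case analysis of ear decompositions or of the structure around a minimum-degree vertex in a $2$-connected graph. This is the only place where genuinely new (non-matroidal) combinatorial work is needed, which is why I would ultimately follow the existing references for that step rather than reprove it in full.
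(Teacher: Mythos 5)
The paper offers no proof of this proposition---it simply cites \cite[Theorem 7.6]{Br2} and \cite[Theorem 5.4.10, Corollary 11.2.15]{Ox}---so there is no internal argument to compare with; your proposal does strictly more work than the text itself.

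Your (ii) $\Rightarrow$ (i) argument is correct and fits the tools already set up in Section 2: for a connected $M$ with $\beta(M)=1$ and $|M|\geq 3$, any element $e$ is neither a loop nor a coloop, Proposition~\ref{pr21}(iv) says exactly one of $M-e$ and $\overline{M/e}$ is connected, Proposition~\ref{pr22}(iv) converts the separable one into a proper series or parallel decomposition at $e$, and multiplicativity of $\beta$ pushes $\beta=1$ onto both strictly smaller connected factors. Two places lean on standard duality facts that the paper never records and that you should state explicitly: the factors of the series connection being connected, and $\beta$ being multiplicative under series connection, both flow from $S(M_1,M_2)^{*}\cong P(M_1^{*},M_2^{*})$ together with $\beta(N)=\beta(N^{*})$ for $|N|\geq 2$. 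Be a little careful, too, that Proposition~\ref{pr22}(iii) is a statement about $h$-polynomials of broken circuit complexes, so it only sees simplifications; before you dualize, restate the multiplicativity in terms of $\beta$ as the coefficient of $x$ in the Tutte polynomial (the identity $h_M(t)=T_M(t,0)$ from the proof of Proposition~\ref{pr21} is the bridge), since $M_i^{*}$ need not be simple.

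For (i) $\Leftrightarrow$ (iii) you correctly isolate the irreducible external ingredient: the structural lemma (Dirac, which the paper itself cites via \cite{Di} later) that a simple $2$-connected graph with no $K_4$-subdivision has a degree-$2$ vertex. One more point deserves a sentence: subdivision and edge-doubling are series/parallel connections with $C_2$, which is a priori narrower than the arbitrary series/parallel connections in the matroidal definition; the coincidence of the two closures is part of what \cite[Theorem 7.6]{Br2} proves. Given that you would cite this anyway, deferring that half to the same references the paper uses is entirely reasonable.
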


Here by a \emph{block} we mean a connected graph (in the usual sense of graph theory) whose cycle matroid is connected. The graph $G$ in the last statement of the above proposition is also called a (graphical) series-parallel network. Note that such a graph which is simple always contains a vertex of degree 2, by a result due to Dirac \cite{Di}; see also \cite[Lemma 5.4.1]{Ox}.

\section{The Gorensteinness of the broken circuit complex}

In this section we prove Theorem \ref{th11} and discuss some related results. We will need a characterization of Gorenstein complexes due to Hochster \cite[Proposition 5.5, Theorem 6.7]{Ho} (see also \cite[Theorem 5.1]{S}):

\begin{lemma}\label{lm31}
Let $K$ be a field. Let $\Delta$ be a simplicial complex and $\Gamma:=\core \Delta$. Denote by $\widetilde{\chi}(\Gamma)$ the reduced Euler characteristic of $\Gamma$. Then $\Delta$ is Gorenstein over $K$ if and only if one of the following conditions holds:
\begin{enumerate}
 \item
 $\Delta=\emptyset$;
 \item
 $\Delta$ consists of one or two vertices;
 \item
 $\Delta$ is Cohen--Macaulay over $K$ of dimension $\geq1$, $\widetilde{\chi}(\Gamma)=(-1)^{\dim \Gamma}$, and for any face $F$ with $\dim\link_\Delta F=1$, $\link_\Delta F$ is either an $n$-gon ($n\geq 3$), or a path with at most 3 vertices.
\end{enumerate}
\end{lemma}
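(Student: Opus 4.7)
The plan is to translate the Gorensteinness of $K[\Delta]$ into a topological condition on $\Delta$ via classical Stanley--Reisner machinery, and then to reduce its verification to strata of dimension $0$ and $1$, where everything can be made explicit. Since Gorenstein forces Cohen--Macaulay, I would first invoke Reisner's criterion to encode CM-ness as the vanishing $\tilde H_i(\link_\Delta F; K) = 0$ for $i < \dim \link_\Delta F$ and every $F \in \Delta$. Hochster's formula for the local cohomology of $K[\Delta]$ then identifies the canonical module with a shift of $K[\Delta]$ exactly when every link has the reduced $K$-homology of a sphere of the appropriate dimension: $\tilde H_{\dim \link_\Delta F}(\link_\Delta F; K) \cong K$ for every $F$. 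The task thus reduces to equating (i)--(iii) with this ``Gorenstein$^*$'' condition on top of CM-ness.

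Next I would peel off cones. If $v$ is a cone vertex of $\Delta$, then $K[\Delta] = K[\link_\Delta v][x_v]$, so Gorensteinness is preserved under polynomial extension and $\Delta$ is Gorenstein iff $\core \Delta$ is. This lets me assume $\Delta = \core \Delta$. Under that assumption, the sphere condition at $F = \emptyset$, combined with CM-vanishing of $\tilde H_i(\Delta; K)$ for $i < \dim \Delta$, is equivalent to $\tilde\chi(\Delta) = (-1)^{\dim \Delta}$, which recovers the Euler-characteristic clause of (iii). The genuinely low-dimensional situations I would then dispose of by hand: the empty complex yields the polynomial ring itself, giving (i); in dimension $0$ the Gorenstein$^*$ possibilities are just a single point (a cone, trivial under the core reduction) and $S^0 =$ two disjoint vertices, giving (ii).

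For $\dim \Delta \geq 1$ I would classify the $1$-dimensional Gorenstein links by applying the core reduction once more inside a $1$-dim complex $L = \link_\Delta F$: $L$ is Gorenstein iff $\core L$ is Gorenstein$^*$, and $\core L$ must be an $n$-gon ($n \geq 3$, forcing $L$ to be that $n$-gon), or $S^0$ (forcing $L$ to be a path on $3$ vertices), or $\emptyset$ (forcing $L$ to be an edge on $2$ vertices). This matches the $1$-dim link clause of (iii) exactly. For the intermediate-dimensional links I would induct on $\dim \link_\Delta F$, using transitivity $\link_\Delta(F \cup G) = \link_{\link_\Delta F}(G)$ to feed $1$-dim link data of $\Delta$ into $1$-dim links of $\link_\Delta F$, and combining this with CM-vanishing and Euler-characteristic identities on the cores of those intermediate-dim links to upgrade each of them to a homology sphere.

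The main obstacle is precisely this inductive descent. The conditions (i)--(iii) only constrain the global Euler characteristic of the core and the $1$-dimensional links, yet the underlying Gorenstein$^*$ demand is a sphere-like reduced homology at \emph{every} link. Making the induction on $\dim \link_\Delta F$ close requires simultaneously juggling Reisner's vanishing, the core operation, transitivity of links, and the propagation of Euler-characteristic identities through the core --- each step of which is routine in isolation, but coordinating them through the induction is where the argument has to be laid out with care. Once that is in place, (iii) aligns exactly with CM-plus-Gorenstein$^*$ and the lemma follows.
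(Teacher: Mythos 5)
The paper does not actually prove this lemma: it is quoted directly from Hochster \cite[Proposition 5.5, Theorem 6.7]{Ho} and Stanley \cite[Theorem 5.1]{S}, so there is no ``paper's own proof'' here to compare against --- you would be supplying a proof the paper omits.

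Your framework is the right one (Reisner's criterion, Hochster's local-cohomology formula, the Gorenstein$^*$ reformulation, and stripping cone vertices to pass to $\core\Delta$), and the pieces you carry out in detail are correct: the cone reduction $K[\Delta] = K[\link_\Delta v][x_v]$, the observation that CM-vanishing plus $\widetilde\chi(\Gamma) = (-1)^{\dim\Gamma}$ is equivalent to $\widetilde H_{\dim\Gamma}(\Gamma;K) \cong K$, the classification of $0$- and $1$-dimensional Gorenstein complexes, and the matching of $n$-gons versus short paths with $\core L$ being an $n$-gon, $S^0$, or $\emptyset$.

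The gap is the inductive descent you yourself flag as ``the main obstacle.'' The Gorenstein$^*$ condition demands $\widetilde H_{\dim\link_\Gamma F}(\link_\Gamma F;K)\cong K$ for \emph{every} $F\in\Gamma$, yet the hypotheses hand you the top-homology condition only at $F=\emptyset$ (via the Euler characteristic of $\Gamma$) and at codimension-$2$ faces (via the $1$-dimensional link condition). For any intermediate-dimensional link $L = \link_\Gamma F$, your plan supplies CM-ness of $L$ and the structure of its $1$-dimensional links (via transitivity), but it does \emph{not} supply the crucial identity $\widetilde\chi(\core L) = (-1)^{\dim\core L}$ that your own reduction requires to conclude $\widetilde H_{\dim L}(L)\cong K$. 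That propagation is precisely the non-trivial content of Hochster's theorem --- to see that it is not automatic, note for instance that a $2$-dimensional CM complex with all vertex links being circuits can still have $\widetilde H_2 = 0$ (e.g.\ a triangulated $\mathbb{RP}^2$ over a field of odd characteristic), so CM-ness plus the codimension-$2$ link data alone does not force a homology sphere at intermediate links; the global Euler characteristic must be made to do real work at every stratum, and you have not shown how it does. Writing ``each step is routine in isolation, but coordinating them through the induction is where the argument has to be laid out with care'' names the hard step without carrying it out; as it stands, the argument does not close.
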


We will also need the following characterization of (locally) complete intersection complexes due to Terai and Yoshida \cite[Corollary 1.10, Proposition 1.11]{TY}:

\begin{lemma}\label{lm32}
 Let $K$ be a field. Let $\Delta$ be a simplicial complex of dimension $\geq1$. Assume that $K[\Delta]$ satisfies Serre's condition $(S_2)$.
 \begin{enumerate}[\rm(a)]
 \item If $\dim \Delta=1$, then the following conditions are equivalent:
  \begin{enumerate}[\rm(i)]
 \item
 $\Delta$ is a locally complete intersection complex;
 \item
 $\Delta$ is a locally Gorenstein complex;
 \item
 $\Delta$ is either an $n$-gon ($n\geq 3$), or an $m$-vertex path ($m\geq 2$).
 \end{enumerate}

 \item If $\dim \Delta\geq2$, then the following conditions are equivalent:
 \begin{enumerate}[\rm(i)]
 \item
 $\Delta$ is a complete intersection complex;
 \item
 $\Delta$ is a locally complete intersection complex;
 \item
 for any face $F$ with $\dim\link_\Delta F=1$, $\link_\Delta F$ is a complete intersection complex.
 \end{enumerate}
  \end{enumerate}
\end{lemma}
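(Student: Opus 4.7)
My plan is to build the argument on the standard description of complete intersection complexes via their minimal non-faces: $\Delta$ is a complete intersection complex if and only if its minimal non-faces (equivalently, the minimal monomial generators of $I_\Delta$) have pairwise disjoint supports in $[n]$. Under this description, the complete intersection property is preserved by taking any link, since the minimal non-faces of $\link_\Delta F$ are obtained from certain minimal non-faces of $\Delta$ by restriction, and disjointness of supports is preserved. This handles the implications (i) $\Rightarrow$ (ii) and (ii) $\Rightarrow$ (iii) in part (b) via iterated restriction through the vertices of $F$.

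For part (a), the link of any vertex in a one-dimensional complex is a discrete set on some $k$ vertices whose Stanley--Reisner ring is $K[x_1,\dots,x_k]/(x_ix_j:i\ne j)$; by Lemma~\ref{lm31}, this ring is Gorenstein precisely when $k\le 2$, which is the same as being a complete intersection. Hence (i) and (ii) are both equivalent to every vertex of the graph $\Delta$ having degree at most $2$. The $(S_2)$ hypothesis forces $\Delta$ to be connected, and a connected graph of maximum degree $\le 2$ is either a cycle or a path, yielding (iii). The reverse direction (iii) $\Rightarrow$ (i) is a direct check on the short list of possible vertex links.

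The crux of part (b) is the implication (iii) $\Rightarrow$ (i), which I would prove by induction on $\dim\Delta$ by showing that any two distinct minimal non-faces $\sigma,\tau$ of $\Delta$ have disjoint supports. If $v\in\sigma\cap\tau$, then $\sigma\setminus\{v\}$ and $\tau\setminus\{v\}$ are distinct minimal non-faces of $\link_\Delta\{v\}$; the latter inherits (iii), so the inductive hypothesis (or, in the base case $\dim\Delta=2$, part (a) applied to the one-dimensional link) forces $\sigma\cap\tau=\{v\}$. To exclude $|\sigma\cap\tau|=1$, one would select a face $F\subseteq(\sigma\cup\tau)\setminus\{v\}$ of codimension two for which $\link_\Delta F$ is one-dimensional and contains $\sigma\setminus F$ and $\tau\setminus F$ as distinct minimal non-faces meeting at $v$; condition (iii) then forces this link to be a complete intersection, contradicting the disjoint-support characterization. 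The main obstacle I anticipate is precisely this final step, where the $(S_2)$ hypothesis is essential to guarantee both the existence of a suitable codimension-two face $F$ and the connectedness of the resulting one-dimensional link, so that the local contradiction can actually be extracted and the local-to-global transfer from (iii) to (i) goes through.
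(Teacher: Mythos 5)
The paper does not prove this lemma at all: it is quoted verbatim from Terai and Yoshida \cite[Corollary~1.10, Proposition~1.11]{TY}, so there is no in-paper argument to compare with, only the attribution. Your proposal is therefore an attempt to reconstruct the Terai--Yoshida proof from scratch.

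Your treatment of part (a) and of the implications (i)\,$\Rightarrow$\,(ii)\,$\Rightarrow$\,(iii) in part (b) is sound, and the inductive step showing that a common element $v$ of two distinct minimal non-faces $\sigma,\tau$ forces $\sigma\cap\tau=\{v\}$ is correct. The genuine gap is in the final exclusion of $|\sigma\cap\tau|=1$. Write $\sigma=\{v\}\cup A$, $\tau=\{v\}\cup B$ with $A,B$ disjoint, nonempty faces. You propose to pick a face $F\subseteq A\cup B$ of codimension two so that $\link_\Delta F$ is one-dimensional and still sees $\sigma\setminus F$ and $\tau\setminus F$ as distinct minimal non-faces through $v$. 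But for $\sigma\setminus F$ even to lie in the vertex set of $\link_\Delta F$ you need $F\cup\{v\}\in\Delta$. Minimality of $\sigma$ and $\tau$ only gives you $\{v\}\cup(F\cap A)\in\Delta$ and $\{v\}\cup(F\cap B)\in\Delta$ separately; there is no reason their union $\{v\}\cup F$ is a face, and indeed $A\cup B$ itself need not be a face of $\Delta$. Moreover the size of $A\cup B$ is unrelated to $\dim\Delta-1$, so you cannot simply choose $F$ inside $A\cup B$ and expect $\dim\link_\Delta F=1$. Invoking $(S_2)$ ``to guarantee the existence of a suitable $F$'' is the very heart of the Terai--Yoshida argument and cannot be asserted without a real construction; as written, this step does not go through.
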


Recall that an arbitrary Noetherian ring $R$ is said to \emph{satisfy Serre's condition $(S_2)$} if $\depth R_P\geq\min\{\dim R_P,2\}$ for every prime ideal $P$ of $R$. So for example, all Cohen--Macaulay rings satisfy $(S_2)$. For an ideal $I$ in a polynomial ring $S$, we also say that $I$ satisfies $(S_2)$ when so does the quotient ring $S/I$.

From the above results one might see that the main part in the proof of Theorem \ref{th11} is the implication (iii)$\Rightarrow$(iv). For this, some more preparations are needed.

\begin{lemma}\label{lm33}
 Let $\Delta$ and $\Sigma$ be respectively the broken circuit complex and the matroid complex of an ordered matroid $(M,<)$. Let $F$ be a face of $\Delta$. Denote by $V_1$ and $V_2$ the vertex sets of $\link_\Delta F$ and $\link_\Sigma F$, respectively. Assume $V_2-V_1\ne\emptyset.$ Then there exists $u\in V_1$ such that $u< v$ for all $v\in V_2-V_1$.
\end{lemma}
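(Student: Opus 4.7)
The plan is to reduce the statement to finding a single $u\in V_1$ with $u<v_0$, where $v_0:=\min(V_2-V_1)$: once such a $u$ is produced, every $v\in V_2-V_1$ satisfies $v\ge v_0>u$, and $u\in V_1$ forces $u\ne v$, so the desired strict inequality holds uniformly in $v$.

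First I would unpack what membership in $V_2-V_1$ means: for such a $v$, the set $F\cup\{v\}$ is independent in $M$ yet contains some broken circuit $C-\min C$. Since $F$ itself contains no broken circuit, necessarily $v\in C-\min C$; and since $F\cup\{v\}$ is independent, $\min C\notin F\cup\{v\}$, which in particular gives $\min C<v$ and $\min C\notin F$. Next I would fix, among all circuits $C$ of $M$ with $v_0\in C-\min C\subseteq F\cup\{v_0\}$, one for which $e_0:=\min C$ is as \emph{small} as possible, and claim that $u:=e_0$ does the job. Since $e_0\notin F$ and $e_0<v_0$ are already in hand, what remains is to verify (a) $F\cup\{e_0\}$ is independent in $M$, and (b) $F\cup\{e_0\}$ contains no broken circuit; together these give $e_0\in V_1$ with the required inequality.

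Both verifications will be carried out by the strong circuit elimination axiom, with $C$ playing the role of one of the two circuits and $e_0$ the element eliminated. For (a), a hypothetical circuit $D\subseteq F\cup\{e_0\}$ would satisfy $e_0\in C\cap D$ and $v_0\in C-D$ (since $v_0\notin F\cup\{e_0\}$), so elimination yields a circuit inside $(C\cup D)-e_0\subseteq F\cup\{v_0\}$, contradicting the independence of $F\cup\{v_0\}$. For (b), a hypothetical broken circuit $D-\min D\subseteq F\cup\{e_0\}$ forces $e':=\min D<e_0$ together with $D\subseteq F\cup\{e_0,e'\}$; eliminating $e_0$ between $C$ and $D$ produces a circuit $D'$ with $v_0\in D'\subseteq F\cup\{v_0,e'\}$. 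Independence of $F\cup\{v_0\}$ then forces $e'\in D'$, and since $e'$ is the minimum element of $(C\cup D)-e_0$, it follows that $\min D'=e'$. But then $D'-\min D'\subseteq F\cup\{v_0\}$ contains $v_0$, so $D'$ is itself a circuit of the type over which we minimized, with $\min D'=e'<e_0$; this contradicts the choice of $e_0$.

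The one delicate point is step (b): one must pin down that $\min D'$ is \emph{exactly} $e'$, not merely $\le e'$, which is what converts the existence of $D'$ into a violation of the minimality of $e_0$ rather than just producing yet another broken circuit inside $F\cup\{v_0\}$. Once that bookkeeping is arranged, both applications of strong circuit elimination follow the same template and the lemma drops out with no further case analysis.
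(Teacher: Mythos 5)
Your proof is correct, but it takes a genuinely different (and more laborious) route than the paper's. Both arguments start the same way: set $v_0=\min(V_2-V_1)$, extract a circuit $C$ with $v_0\in C-\min C\subseteq F\cup\{v_0\}$, and propose $u:=\min C$ (you additionally minimize $\min C$ over all such circuits, which the paper does not do). The divergence is in how $u\in V_1$ is established. You verify the two defining conditions of $V_1$ head-on: independence of $F\cup\{u\}$ via circuit elimination, and absence of broken circuits in $F\cup\{u\}$ via strong circuit elimination combined with the extremal choice of $C$ (the delicate bookkeeping you flag, pinning $\min D'$ down to exactly $e'$, is indeed the crux and you handle it correctly). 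The paper instead sidesteps the second verification entirely: since $F\cup\{v_0\}$ is independent, $C$ is the \emph{unique} circuit in $F\cup\{v_0,u\}$, and since $C$ contains $v_0$, deleting $v_0$ leaves $F\cup\{u\}$ independent, i.e.\ $u\in V_2$; then $u<v_0=\min(V_2-V_1)$ forces $u\in V_1$ with no further argument. The paper's observation buys a substantial shortening — no strong circuit elimination, no auxiliary minimization, and no case (b) at all — whereas your version is more self-contained in that it never leans on the minimality of $v_0$ to deduce membership in $\Delta$, instead exhibiting it directly. (A small aside: your step (a) only needs weak circuit elimination, since all you need there is \emph{some} circuit inside $F\cup\{v_0\}$; the strong form is genuinely needed only in step (b) to keep $v_0$ inside the new circuit.)
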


\begin{proof}
 As $\Delta\subseteq \Sigma$, it is obvious that $V_1\subseteq V_2$. Let  $v=\min(V_2-V_1)$. Since $v\not\in V_1$, $F\cup\{v\}\not\in\Delta$. Thus there exists a circuit $C$ of $M$ such that $bc(C)\subseteq F\cup\{v\}$. We have $v\in bc(C)$, since otherwise $bc(C)\subseteq F$, contradicting the fact that $F\in \Delta$. Hence $u:=\min C< v.$ We will show that $u\in V_1.$ Indeed, since $v\in V_2$, $F\cup\{v\}\in\Sigma$, i.e., $F\cup\{v\}$ is an independent set of $M$. Therefore, $C$ is the unique circuit contained in $F\cup\{v,u\}$; see \cite[Proposition 1.1.6]{Ox}. It follows that $F\cup\{u\}$ is an independent set of $M$, or in other words, $u\in V_2.$ Note that $u\not\in V_2-V_1$ (since $u< v$), we obtain $u\in V_1$, as desired.
\end{proof}

\begin{lemma}\label{lm34}
Let $\Delta$ be the broken circuit complex of an ordered matroid $(M,<)$. Assume $\dim \Delta\geq1$. If $F$ is a face of $\Delta$ with $\link_\Delta F$ an $n$-gon, then $n\leq 4$.
\end{lemma}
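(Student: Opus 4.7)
The plan is to identify $\link_\Delta F$ as a subgraph of the complete multipartite graph $\link_\Sigma F$ and then exploit Lemma \ref{lm33} applied to suitable extensions of $F$. Since $\Delta$ is Cohen--Macaulay and $\dim\link_\Delta F=1$, the face $F$ has cardinality $r-2$, where $r=\rank M$, so the contraction $M/F$ has rank $2$. Let $P_1,\ldots,P_m$ be the parallel classes of the non-loops of $M/F$; then the vertex set $V_2$ of $\link_\Sigma F$ equals $P_1\cup\cdots\cup P_m$, and $\link_\Sigma F$ is the complete $m$-partite graph with these parts. Lemma \ref{lm33} applied to $F$ shows that $v_0:=\min V_2$ belongs to $V_1$, so $v_0=\min V_1$; let $P_0$ denote the parallel class of $v_0$.

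For each $v\in V_1$ I will apply Lemma \ref{lm33} to $F':=F\cup\{v\}$. The vertex set of $\link_\Sigma F'$ is the set of non-loops of $M/F'$, which equals $V_2\setminus P(v)$; the vertex set of $\link_\Delta F'$ is the two-element set of neighbors of $v$ in the $n$-gon. Suppose $v\in V_1\setminus P_0$; then $v_0\in V_2\setminus P(v)$, and if $v_0$ were not adjacent to $v$ in the $n$-gon, Lemma \ref{lm33} applied to $F'$ would produce a vertex of $\link_\Delta F'$ (and hence an element of $V_1$) strictly smaller than $v_0$, contradicting $v_0=\min V_1$. Thus every element of $V_1\setminus P_0$ is a neighbor of $v_0$ in the $n$-gon, and since $v_0$ has degree $2$ there, $|V_1\setminus P_0|\leq 2$.

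Finally, any two elements of $V_1\cap P_0$ lie in the same part of $\link_\Sigma F$ and are therefore non-adjacent in $\link_\Delta F$; so the two neighbors of each $v\in V_1\cap P_0$ must lie in $V_1\setminus P_0$. Since $v_0\in V_1\cap P_0$ demands two such neighbors, we must have $|V_1\setminus P_0|=2$, say $V_1\setminus P_0=\{w_1,w_2\}$, and every element of $V_1\cap P_0$ is forced to be adjacent to both $w_1$ and $w_2$; but $w_1$ itself has degree $2$ in the $n$-gon, so $|V_1\cap P_0|\leq 2$. Adding the two bounds gives $n=|V_1|\leq 4$. The main technical point will be keeping the parallel-class structure of $M/F$ in view, since $M/F$ is typically not simple; this is what makes $V_2\setminus P(v)$ (rather than $V_2\setminus\{v\}$) the correct object to feed into Lemma \ref{lm33}.
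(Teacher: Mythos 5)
Your proof is correct, but it takes a genuinely different route from the paper's. The paper argues by contradiction: assuming $n\geq5$, it picks the minimal vertex $v_1$ of the $n$-gon $v_1\ldots v_n$ and applies the symmetric basis-exchange axiom to the disjoint bases $F\cup\{v_1,v_2\}$ and $F\cup\{v_3,v_4\}$ to conclude that one of the chords $F\cup\{v_1,v_3\}$, $F\cup\{v_1,v_4\}$ is a basis of $M$ not lying in $\Delta$; feeding the corresponding face $F\cup\{v_i\}$ into Lemma~\ref{lm33} then produces an element of $V_1$ strictly smaller than $v_1$, a contradiction. Your argument instead works directly with the structure of $\link_\Sigma F$: you make explicit that it is the complete multipartite graph on the parallel classes of $M/F$, locate $v_0=\min V_1$ via Lemma~\ref{lm33}, and split $V_1$ into $V_1\cap P_0$ and $V_1\setminus P_0$, bounding each piece by $2$ using the degree-two constraint of the $n$-gon together with a second application of Lemma~\ref{lm33} at the faces $F\cup\{v\}$. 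I checked the details and they hold up; in particular your observation that the vertex set of $\link_\Sigma(F\cup\{v\})$ is $V_2\setminus P(v)$ rather than $V_2\setminus\{v\}$ is exactly what makes the second application of Lemma~\ref{lm33} legitimate, and the inequalities $|V_1\setminus P_0|=2$ and $|V_1\cap P_0|\leq 2$ both follow correctly (for the first, $v_0$ needs two neighbors which must lie outside $P_0$ since elements of $P_0$ are pairwise non-adjacent in $\link_\Sigma F$; for the second, all of $V_1\cap P_0$ are neighbors of $w_1$). What your approach buys is a more transparent picture of where the $n$-gon can sit inside $\link_\Sigma F$ — it must be confined to $P_0\cup\{w_1,w_2\}$ — at the cost of length; the paper's basis-exchange argument is leaner and reaches the contradiction in one step.
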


\begin{proof}
Suppose on the contrary that there exists a face $F\in\Delta$ such that $\link_\Delta F$ is an $n$-gon $v_1v_2\ldots v_n$ with $n\geq5$ ($\{v_1,v_2\},\{v_2,v_3\},\ldots,\{v_n,v_1\}$ are the edges of $v_1v_2\ldots v_n$). Assume $v_1=\min\{v_i:i=1,\ldots,n\}$. Since $B_1=F\cup\{v_1,v_2\}$ and $B_2=F\cup\{v_3,v_4\}$ are bases of $M$, either $B_3=F\cup\{v_1,v_3\}$ or $B_4=F\cup\{v_1,v_4\}$ is a basis of $M$; see, e.g., \cite[Corollary 1.2.5]{Ox}. Note that neither $B_3$ nor $B_4$ belongs to $\Delta$ as $n\geq5$. So in the following argument we may assume without loss of generality that $B_3$ is a basis of $M$ and $B_3\not\in\Delta$. Denote by $\Sigma$ the matroid complex of $M$. Then $v_1\in \link_\Sigma(F\cup\{v_3\})-\link_\Delta(F\cup\{v_3\})$. By Lemma \ref{lm33}, there exists $u\in\link_\Delta(F\cup\{v_3\})$ with $u< v_1$. In particular, $u\in\link_\Delta
F$ and $u< v_1$. But this is impossible because it contradicts our assumption on $v_1$.
\end{proof}

\begin{lemma}\label{lm35}
Let $\Delta$ be the broken circuit complex of an ordered matroid $(M,<)$. If $\Delta$ is an $m$-vertex path, then $m\leq 3$.
\end{lemma}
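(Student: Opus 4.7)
The plan is to reduce to the case where $M$ is a simple matroid of rank $2$, and then identify a vertex of $\Delta$ with too high a degree to fit inside a path.

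First, since an $m$-vertex path is $1$-dimensional, $\dim\Delta=1$; and because $BC(M,<)$ has the same dimension as the matroid complex of $M$, this forces the rank of $M$ to be $2$. Using that the broken circuit complex is invariant (up to isomorphism) under passage to the simplification, I may assume $M$ is simple, so it has no loops and no $2$-element circuits. Then every singleton of the ground set $E$ is an independent set and is not a broken circuit, so every element of $E$ is a vertex of $\Delta$, and $m=|E|$.

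The key step is to focus on the element $v_\ast\in E$ that is smallest with respect to $<$. For any other $w\in E$, the pair $\{v_\ast,w\}$ is an independent set of $M$, since in a simple rank-$2$ matroid every $2$-subset is independent. Moreover, $\{v_\ast,w\}$ cannot be a broken circuit: such a broken circuit would arise from a $3$-element circuit $\{c,v_\ast,w\}$ whose least element $c$ has been removed, forcing $c<v_\ast$, which contradicts the minimality of $v_\ast$. Consequently $\{v_\ast,w\}\in\Delta$ for every $w\in E\setminus\{v_\ast\}$, so $v_\ast$ is adjacent in $\Delta$ to each of the other $m-1$ vertices.

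To conclude I would simply invoke that every vertex of an $m$-vertex path has degree at most $2$. Since $v_\ast$ has degree $m-1$ in $\Delta$, this yields $m-1\leq 2$, hence $m\leq 3$. I do not anticipate any serious obstacle here; the argument is essentially immediate once the reduction to a simple rank-$2$ matroid has been made, and the only mild point to verify is that the minimum element of $E$ under $<$ really is a vertex of $\Delta$, which follows from the simplicity of $M$.
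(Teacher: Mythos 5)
Your proof is correct and rests on the same core observation as the paper's: the smallest element $e_0$ of the ground set is adjacent in $\Delta$ to every other vertex, so a path structure forces $m-1\leq 2$. The paper packages this as the standard fact that $BC(M,<)$ is a cone with apex $e_0$ over the (here $0$-dimensional) reduced broken circuit complex $\overline{BC}(M)$, whereas you re-derive the relevant adjacencies directly from the definition of broken circuit and the minimality of $v_\ast$; this is only a presentational difference.
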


\begin{proof}
Recall that $\Delta$ is a cone over the reduced broken circuit complex $\overline{BC}(M)$ which consists of vertices since $\dim \Delta=1$. It follows that if $\Delta$ is a path then $\overline{BC}(M)$ has at most 2 vertices, and hence, $\Delta$ has at most 3 vertices.
\end{proof}

We are now ready to prove Theorem \ref{th11}.

\begin{proof}[Proof of Theorem \ref{th11}]
The case $\Delta=\emptyset$ is trivial. The case $\dim \Delta=0$ is also trivial, because in this case $\Delta$ consists of only one vertex (since $\Delta$ is a cone over the empty complex $\overline{BC}(M)$).

If $\dim \Delta= 1$, then $F=\emptyset$ is the only face of $\Delta$ with $\dim\link_\Delta F=1$, and $\link_\Delta \emptyset=\Delta$. So we have (i)$\Leftrightarrow$(iii) and (iv)$\Leftrightarrow$(vi). Moreover, the equivalence  (ii)$\Leftrightarrow$(v) follows from Lemma \ref{lm32}, and the implication (i)$\Rightarrow$(ii) is well-known; see \cite[Proposition 5.6]{Ho}. Thus it remains to show (ii)$\Rightarrow$(vi). By Lemma \ref{lm32}, $\Delta$ is either an $n$-gon ($n\geq 3$) or an $m$-vertex path ($m\geq2$). Now by Lemma \ref{lm34} and Lemma \ref{lm35}, $n\leq4$ and $m\leq3$. In any case, it is easy to see that $\Delta$ is a complete intersection.

Now assume that $\dim \Delta\geq2$. Then we have (iv)$\Leftrightarrow$(v)$\Leftrightarrow$(vi) from Lemma \ref{lm32} and (vi)$\Rightarrow$(i)$\Rightarrow$(ii)$\Rightarrow$(iii) as it is well-known; see again \cite[Proposition 5.6]{Ho}. Finally, to prove (iii)$\Rightarrow$(iv), let $F$ be a face of $\Delta$ with $\dim\link_\Delta F=1$. Then by Lemma \ref{lm31}, $\link_\Delta F$ is either an $n$-gon or a path with at most 3 vertices. Using Lemma \ref{lm34}, one gets that $n\leq4$ when $\link_\Delta F$ is an $n$-gon. So we come to the same situation as in the case $\dim \Delta= 1$ above, in which $\link_\Delta F$ is easily seen to be a complete intersection.
\end{proof}

A classical theorem due to Cowsik and Nori \cite{CN} implies that a simplicial complex $\Delta$ is a complete intersection if and only if all powers of its Stanley--Reisner ideal $I_\Delta^m$ are Cohen--Macaulay. In \cite{TT}, Terai and Trung give a refinement for this result: they show that $\Delta$ is a complete intersection if and only if $I_\Delta^m$ is Cohen--Macaulay if and only if $I_\Delta^m$ satisfies Serre's condition $(S_2)$ for some $m\geq3$. They also point out that there are simplicial complexes $\Delta$, e.g., the $5$-gon, for which $I_\Delta^2$ is Cohen--Macaulay but $I_\Delta^m$ is not Cohen--Macaulay for every $m\geq3$. As one might have seen from above, this is not the case for the broken circuit complex.

\begin{proposition}\label{pr36}
Let $\Delta$ be the broken circuit complex of an ordered matroid $(M,<)$. Let $K$ be an arbitrary field. Denote by $I_\Delta$ the Stanley--Reisner ideal of $\Delta$ over $K$. Then the following conditions are equivalent:
\begin{enumerate}
\item
$\Delta$ is a complete intersection complex;
\item
$I_\Delta^m$ is Cohen--Macaulay for every $m\geq1$;
\item
$I_\Delta^m$ is Cohen--Macaulay for some $m\geq2$;
\item
$I_\Delta^m$ satisfies $(S_2)$ for some $m\geq2$.
\end{enumerate}
\end{proposition}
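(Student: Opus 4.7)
The plan is to prove the cycle (i) $\Rightarrow$ (ii) $\Rightarrow$ (iii) $\Rightarrow$ (iv) $\Rightarrow$ (i). The first implication is the Cowsik--Nori theorem recalled just above, while (ii) $\Rightarrow$ (iii) and (iii) $\Rightarrow$ (iv) are immediate because Cohen--Macaulayness implies Serre's condition $(S_2)$. All the substance therefore lies in (iv) $\Rightarrow$ (i).

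When $m \geq 3$, I would simply invoke the Terai--Trung refinement cited right before the proposition, which already yields (i) directly from (iv). The only genuinely new case is $m = 2$, precisely because for a general simplicial complex the $(S_2)$ property of $I_\Delta^2$ is strictly weaker than $\Delta$ being a complete intersection; the $5$-gon remarked on above is the standard witness.

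For the case $m = 2$ my plan is to pass through Theorem \ref{th11}. Using Terai--Trung's analysis at $m=2$, I would conclude that the $(S_2)$ property of $I_\Delta^2$ forces $\Delta$ to be locally Gorenstein over $K$; this is the natural analogue at $m = 2$ of their locally-complete-intersection conclusion for $m \geq 3$. The equivalence (ii) $\Leftrightarrow$ (vi) of Theorem \ref{th11} then upgrades this directly to $\Delta$ being a complete intersection, which is (i). This is precisely where the broken circuit hypothesis does the work: for the $5$-gon the implication locally Gorenstein $\Rightarrow$ complete intersection fails, but Theorem \ref{th11}, via Lemmas \ref{lm34} and \ref{lm35}, excludes such pathologies for broken circuit complexes.

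The main obstacle is thus establishing, at $m = 2$, the implication $(S_2)$ of $I_\Delta^2$ $\Rightarrow$ $\Delta$ locally Gorenstein; once this is in hand, Theorem \ref{th11} proved earlier in this section closes the argument without any further matroidal input. No extra input from Section 2 or the beta-invariant analysis is expected to be needed.
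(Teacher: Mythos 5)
Your overall strategy mirrors the paper's: (i)$\Rightarrow$(ii)$\Rightarrow$(iii)$\Rightarrow$(iv) are standard, and for (iv)$\Rightarrow$(i) one reduces via Terai--Trung \cite[Theorem~4.3]{TT} to the single case $m=2$. Where the proposal breaks down is in the treatment of that remaining case.

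You assert that ``Terai--Trung's analysis at $m=2$'' yields that $(S_2)$ of $I_\Delta^2$ forces $\Delta$ to be locally Gorenstein, and you rightly flag this as ``the main obstacle'' --- but you do not actually fill it, and the attribution is wrong. Terai--Trung's structural conclusion (that links satisfy the required property and hence, via Terai--Yoshida, $\Delta$ is a complete intersection) is proved only for $m\geq 3$; for $m=2$ they merely point out the $5$-gon as an obstruction. There is no Terai--Trung ``locally Gorenstein at $m=2$'' theorem to invoke, and establishing the implication you need is essentially the whole content of the proof. The paper does this by (a) passing to one-dimensional links of faces $F$ with $\dim\link_\Delta F=1$, which still satisfy $(S_2)$ for $I^2$ by \cite[Corollary~4.2]{TT}; (b) applying the Minh--Trung classification \cite[Corollary~3.4]{MT} of one-dimensional $\Delta$ with $I_\Delta^2$ Cohen--Macaulay ($\leq3$ vertices, $4$-gon, or $5$-gon); (c) ruling out the $5$-gon by Lemma~\ref{lm34}; and (d) concluding via Lemma~\ref{lm32}(b). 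Your sketch omits step (b) entirely --- you explicitly claim no extra input is needed --- yet the Minh--Trung result, which is separate from both Section~2 and Terai--Trung, is indispensable for the $m=2$ case.

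There is a second, more structural issue with your chosen intermediate target. ``Locally Gorenstein'' is a statement about \emph{vertex} links, which when $\dim\Delta>2$ are themselves higher-dimensional, non-broken-circuit complexes; showing that such a link is Gorenstein from $(S_2)$ of its square is not a base case you can reach directly from Minh--Trung, and Hochster's criterion (Lemma~\ref{lm31}) would additionally demand an Euler-characteristic condition you have no handle on. This is precisely why the paper instead targets the links of \emph{codimension-one} faces (condition~(iv) of Theorem~\ref{th11}, or equivalently Lemma~\ref{lm32}(b)(iii)): those links are always one-dimensional, so Minh--Trung applies directly and Lemma~\ref{lm34} closes the loop. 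In short, your cycle of implications is fine, but the pivotal step at $m=2$ is left as a conjecture, is misattributed, and aims at a condition that is harder to reach than the one the paper uses.
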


\begin{proof}
As in the proof of Theorem \ref{th11}, the cases $\Delta=\emptyset$ or $\dim\Delta=0$ are trivial. Let $\dim\Delta\geq1$. By \cite[Theorem 4.3]{TT}, it suffices to show that $\Delta$ is a complete intersection when $I_\Delta^2$ satisfies $(S_2)$. If $\dim\Delta=1$, then $(S_2)$ means that $I_\Delta^2$ is Cohen--Macaulay. Hence by \cite[Corollary 3.4]{MT}, either $\Delta$ has at most 3 vertices, or $\Delta$ is a 4-gon or 5-gon. But by Lemma \ref{lm34}, $\Delta$ cannot be a 5-gon. For the remaining possibilities of $\Delta$, one checks easily that $\Delta$ is a complete intersection. Now assume $\dim\Delta\geq2$. Let $F$ be a face of $\Delta$ with $\dim\link_\Delta F=1$. Since $I_\Delta^2$ satisfies $(S_2)$, $I_{\link_\Delta F}^2$ also satisfies $(S_2)$; see \cite[Corollary 4.2]{TT}. Note that $\link_\Delta F$ cannot be a 5-gon by Lemma \ref{lm34}. So one may use the same argument as in the case $\dim\Delta=1$ to get that $\link_\Delta F$ is a complete intersection. Now by Lemma \ref{lm32}, $\Delta$ is a
complete intersection.
\end{proof}

The equivalence of conditions (i) and (ii) in the following consequence was proved in \cite[Theorem 4.4.10]{Sto}.

\begin{corollary}\label{co37}
 Let $\Sigma$ be the matroid complex of a matroid $M$. Let $K$ be an arbitrary field. Denote by $I_\Sigma$ the Stanley--Reisner ideal of $\Sigma$ over $K$. Then the following conditions are equivalent:
 \begin{enumerate}
 \item
 $\Sigma$ is Gorenstein over $K$;
  \item
 $\Sigma$ is a complete intersection complex;
 \item
$I_\Sigma^m$ is Cohen--Macaulay for every $m\geq1$;
\item
$I_\Sigma^m$ is Cohen--Macaulay for some $m\geq2$;
\item
$I_\Sigma^m$ satisfies $(S_2)$ for some $m\geq2$.
 \end{enumerate}
Moreover, if $\dim\Sigma\geq1$, then each of the above conditions is equivalent to any one of the following:
\begin{enumerate}[\rm(vi)]
  \item
 $\Sigma$ is locally Gorenstein over $K$;
 \item
 for any face $F$ with $\dim\link_\Sigma F=1$, $\link_\Sigma F$ is Gorenstein over $K$;
 \item
 for any face $F$ with $\dim\link_\Sigma F=1$, $\link_\Sigma F$ is a complete intersection complex;
  \item[\rm(ix)]
 $\Sigma$ is a locally complete intersection complex.
\end{enumerate}
\end{corollary}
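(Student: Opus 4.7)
The plan is to repeat the arguments from the proofs of Theorem~\ref{th11} and Proposition~\ref{pr36} with the matroid complex $\Sigma$ in place of the broken circuit complex $\Delta$. I take the equivalence (i)$\Leftrightarrow$(ii) as given from \cite[Theorem~4.4.10]{Sto}, and the trivial cases $\Sigma=\emptyset$ and $\dim\Sigma=0$ are handled by direct inspection (in dimension $0$, $\Sigma$ is a set of isolated vertices coming from a rank-$1$ matroid, and each of the conditions translates into the matroid having at most two non-loops).

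The key structural observation driving the adaptation is: for every face $F\in\Sigma$, the link $\link_\Sigma F$ coincides with the matroid complex of the contraction $M/F$. When $\dim\link_\Sigma F=1$, the matroid $M/F$ has rank $2$, and the (simplified) matroid complex of any rank-$2$ matroid is precisely the complete multipartite graph $K_{n_1,\dots,n_k}$ on the parallel classes of its non-loops. Since the only complete multipartite cycles are $K_{1,1,1}=C_3$ and $K_{2,2}=C_4$, and the only complete multipartite paths are $K_{1,1}=P_2$ and $K_{1,2}=P_3$, the link $\link_\Sigma F$ cannot be an $n$-gon with $n\geq 5$ nor a path with four or more vertices. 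This single observation takes over the combined role of Lemmas~\ref{lm34} and~\ref{lm35} and, crucially, bypasses the order-based Lemma~\ref{lm33} altogether.

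Given this, each remaining implication transfers verbatim. Matroid complexes are shellable, hence Cohen--Macaulay over any field, so \cite[Theorem~4.3]{TT} together with \cite[Corollary~3.4]{MT} reduces (ii)$\Leftrightarrow$(iii)$\Leftrightarrow$(iv)$\Leftrightarrow$(v) to showing that $\Sigma$ is a complete intersection whenever $I_\Sigma^2$ satisfies $(S_2)$; the obstructing $5$-gon is ruled out by the multipartite observation, and the remaining candidate links are easily verified to be complete intersections, exactly as in Proposition~\ref{pr36}. When $\dim\Sigma\geq 1$, the equivalences with (vi)--(ix) follow by rerunning the proof of Theorem~\ref{th11}, applying Hochster's criterion (Lemma~\ref{lm31}) and the Terai--Yoshida criterion (Lemma~\ref{lm32}) and invoking the multipartite observation wherever Lemmas~\ref{lm34}--\ref{lm35} were used. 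The only non-mechanical step is the structural claim about links, and once it is in place everything else is routine bookkeeping; I do not expect any genuine obstruction beyond this point.
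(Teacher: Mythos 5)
Your proposal is correct, but it takes a genuinely different route from the paper's proof. The paper reduces the matroid complex to the broken circuit complex setting via the free dual extension $\tilde{M}$ of $M$: by a theorem of Brylawski, $\Sigma$ is isomorphic to the \emph{reduced} broken circuit complex of $\tilde{M}$, so the broken circuit complex $\tilde{\Delta}$ of $\tilde{M}$ is a cone over $\Sigma$, and one pulls Theorem~\ref{th11} and Proposition~\ref{pr36} (and the key link identity $\link_\Sigma F=\link_{\tilde{\Delta}}(F\cup e_0)$) back through the cone; the only piece the paper handles separately is the $\dim\Sigma=1$, $m$-vertex-path case, which it dispatches by a basis-exchange argument. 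You instead prove the needed restrictions on $1$-dimensional links directly: $\link_\Sigma F$ is the independence complex of $M/F$, and for a rank-$2$ matroid this is the complete multipartite graph on the parallel classes of non-loops, whence the only cycles are $C_3,C_4$ and the only paths are $P_2,P_3$. This one structural observation subsumes the roles of Lemmas~\ref{lm33}--\ref{lm35} and also handles the path case uniformly, without ever invoking the free dual extension or an ordering of the ground set. Your route is more self-contained and makes transparent why $5$-gons and long paths cannot occur in a matroid complex, at the modest cost of reproving Theorem~\ref{th11} and Proposition~\ref{pr36} in the new setting rather than quoting them; the paper's route is shorter on the page but leans on the (slightly less elementary) free dual extension construction. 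One small stylistic nit: the parenthetical ``(simplified)'' in your key observation is misleading --- it is the matroid complex of $M/F$ itself, not of its simplification, that equals $K_{n_1,\dots,n_k}$; you clearly mean the former, so just drop the word.
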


\begin{proof}
 Let $\tilde{M}$ be the free dual extension of $M$. Then $\Sigma$ is isomorphic to the reduced broken circuit complex of $\tilde{M}$; see \cite[Theorem 4.2]{Br}. So we may consider the broken circuit complex $\tilde{\Delta}$ of $\tilde{M}$ as a cone over $\Sigma$. It follows that $\Sigma$ is Gorenstein (resp. a complete intersection) if and only if $\tilde{\Delta}$ is. Moreover, if $I_\Sigma^m$ satisfies $(S_2)$ then so does $I_{\tilde{\Delta}}^m$. Therefore, from Theorem \ref{th11} and Proposition \ref{pr36} we get the equivalence of conditions (i)-(v).

Observe that $\link_\Sigma F=\link_{\tilde{\Delta}}(F\cup e_0)$ for all $F\in \Sigma$, where $e_0$ is the apex of $\tilde{\Delta}$ over $\Sigma$. So if $\dim \Sigma\geq2$, then the implication (vii)$\Rightarrow$(viii) can be proved as in the proof of Theorem \ref{th11}. From this the equivalence of conditions (i) and (ii) with (vi)-(ix) follows easily. Now assume that $\dim \Sigma=1$. The only thing we need to prove here is the implication (vi)$\Rightarrow$(i). Similar to the proof of Theorem \ref{th11}, this will be done once we have shown that $m\leq3$ when $\Sigma$ is an $m$-vertex path. Suppose $\Sigma$ is a path containing 4 consecutive vertices $v_1,v_2,v_3,v_4$ in that order. Then $\{v_1,v_2\}$ and $\{v_3,v_4\}$ are bases of $M$. It follows that either $\{v_1,v_3\}$ or $\{v_1,v_4\}$ is a basis of $M$; see \cite[Corollary 1.2.5]{Ox}. Thus $\{v_1,v_3\}\in \Sigma$ or $\{v_1,v_4\}\in \Sigma$, and hence, $\Sigma$ cannot be a path. This contradiction concludes the proof.
\end{proof}

\begin{remark}
 In Corollary \ref{co37}, without the assumption on the dimension of the matroid complex, conditions (i)-(v) may not be equivalent to conditions (vi)-(ix). For instance, let $M$ be the uniform matroid $U_{1,n}$ with $n\geq3$. Then $\Sigma$ is a 0-dimensional complex consists of $n$ vertices. So $\Sigma$ is locally Gorenstein (and a locally complete intersection), but not Gorenstein by Lemma \ref{lm31}(ii). Of course in this case, conditions (vii), (viii) in Corollary \ref{co37} hold vacuously.
\end{remark}

\section{The Gorensteinness of the Orlik--Terao algebra}

We present the proofs of Theorem \ref{th12} and Theorem \ref{th13} in this section. For this, the following simple lemma will play an important role.

\begin{lemma}\label{lm41}
 Let $M_1,M_2,M_3$ be simple matroids such that $M_1$ is either a direct sum or a parallel connection of $M_2$ and $M_3$. Let
$(h_{i,0},\ldots,h_{i,s_i})$ denote the $h$-vector of a broken circuit complex of $M_i$, where $h_{i,s_i}\ne 0$. Then the following conditions are equivalent:
\begin{enumerate}
 \item $h_{1,0}=h_{1,s_1}$ and $h_{1,1}=h_{1,s_1-1}$;
 \item $h_{i,0}=h_{i,s_i}$ and $h_{i,1}=h_{i,s_i-1}$ for $i=2,3$.
\end{enumerate}
\end{lemma}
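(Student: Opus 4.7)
The plan is to reduce both conditions to statements about the coefficients of a single product of polynomials, then invoke positivity together with a Stanley-type inequality. By Proposition \ref{pr21}(ii) in the direct sum case and Proposition \ref{pr22}(iii) in the parallel connection case, the full $h$-polynomials satisfy $h_{M_1}(t)\cdot t^{\epsilon} = h_{M_2}(t)\,h_{M_3}(t)$ for some $\epsilon\in\{0,1\}$. Writing $h_{M_i}(t) = t^{r_i - s_i}\, p_i(t)$ where
\[ p_i(t) = h_{i,0}\, t^{s_i} + h_{i,1}\, t^{s_i - 1} + \cdots + h_{i,s_i} \]
has nonzero constant term, one verifies in either case (using the rank identity $r_1 = r_2 + r_3 - \epsilon$) that $s_1 = s_2 + s_3$ and $p_1(t) = p_2(t)\, p_3(t)$. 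Expanding this product and using $h_{i,0} = 1$ from Proposition \ref{pr21}(i) yields
\begin{align*}
h_{1,0} &= 1, & h_{1,s_1} &= h_{2,s_2}\, h_{3,s_3},\\
h_{1,1} &= h_{2,1} + h_{3,1}, & h_{1,s_1-1} &= h_{2,s_2-1}\, h_{3,s_3} + h_{2,s_2}\, h_{3,s_3-1}.
\end{align*}

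The implication (ii) $\Rightarrow$ (i) is then immediate by direct substitution. For the converse (i) $\Rightarrow$ (ii), the equality $h_{1,0} = h_{1,s_1}$ together with the first identity gives $h_{2,s_2}\, h_{3,s_3} = 1$; since these are positive integers (each being the leading nonzero entry in the $h$-vector of a shellable, hence Cohen--Macaulay, complex), one obtains $h_{2,s_2} = h_{3,s_3} = 1$, which is one half of (ii). Substituting into the remaining hypothesis $h_{1,1} = h_{1,s_1 - 1}$ reduces it to
\[ h_{2,1} + h_{3,1} = h_{2,s_2 - 1} + h_{3,s_3 - 1}, \]
so to conclude the termwise equalities I would need the inequality $h_{i,1} \leq h_{i,s_i - 1}$ for $i = 2,3$.

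The main obstacle is to justify this last inequality from the standing equality $h_{i,s_i} = 1$. Decomposing $M_i$ into its connected components and iterating Proposition \ref{pr21}(ii) together with Proposition \ref{pr21}(i), the trailing entry $h_{i,s_i}$ factors as the product of the $\beta$-invariants of those components, so $h_{i,s_i} = 1$ forces $\beta(N) = 1$ for every connected component $N$ of $M_i$. By Proposition \ref{pr23}, each such $N$ is either a coloop or a series-parallel network, and in either situation $N$ is the cycle matroid of a graph, hence representable over every field. Therefore $M_i$ itself is representable, so Proposition \ref{pr21}(v) applied at summation index $1$ yields $1 + h_{i,1} \leq h_{i,s_i - 1} + 1$, i.e., $h_{i,1} \leq h_{i,s_i - 1}$. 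Combined with the equality of sums displayed above, this forces the two desired termwise equalities, completing the proof of (ii).
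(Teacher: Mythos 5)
Your proof is correct and follows essentially the same route as the paper: express $h_{M_1}$ as a (shifted) product of $h_{M_2}$ and $h_{M_3}$, read off the four coefficient identities, deduce $h_{2,s_2}=h_{3,s_3}=1$, then argue that each $M_i$ is graphic (components have $\beta=1$, hence are coloops or series-parallel networks) so that Proposition~\ref{pr21}(v) yields $h_{i,1}\le h_{i,s_i-1}$ and forces equality. The only cosmetic differences are your use of the normalized polynomials $p_i(t)$ and your direct factorization of $h_{i,s_i}$ as a product of $\beta$-invariants, where the paper instead invokes divisibility; both are equivalent.
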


\begin{proof}
 Let $r_i=\rank(M_i)$ and denote by $h_{M_i}(t)=\sum_{j=0}^{s_i}h_{i,j}t^{r_i-j}$ the $h$-polynomial of the broken circuit complex of $M_i$. Then by Proposition \ref{pr21}(ii) and Proposition \ref{pr22}(iii), we have
$$h_{M_1}(t)=
\begin{cases}
h_{M_2}(t)h_{M_3}(t) &\text{if  } M_1=M_2\oplus M_3,\\
t^{-1}h_{M_2}(t)h_{M_3}(t) &\text{if  } M_1=P(M_2,M_3).
\end{cases}
$$
It follows that in both cases,
$$\begin{aligned}
    h_{1,0}&=h_{2,0}h_{3,0}, &h_{1,1}&=h_{2,0}h_{3,1}+h_{2,1}h_{3,0},\\
   h_{1,s_1}&=h_{2,s_2}h_{3,s_3}, &h_{1,s_1-1}&=h_{2,s_2}h_{3,s_3-1}+h_{2,s_2-1}h_{3,s_3}.
 \end{aligned}$$
Thus, the implication (ii)$\Rightarrow$(i) is obvious. For the converse, we first have $h_{i,s_i}=h_{i,0}=1$ for $i=2,3$ because $h_{1,s_1}=h_{1,0}=1$. Let $N$ be a connected component of $M_2$. Then $h_N(t)$, the $h$-polynomial of the broken circuit complex of $N$, divides $h_{M_2}(t)$ by Proposition \ref{pr21}(ii). It follows that $\beta(N)$, which is the last nonzero coefficient of $h_N(t)$ by Proposition \ref{pr21}(i) and (iii), must divide $h_{2,s_2}=1$. Hence, $\beta(N)=1$. From Proposition \ref{pr23}, this means that $N$ is either a coloop or a series-parallel network. Thus every connected component of $M_2$ is a graphic matroid, which implies that $M_2$ itself is also graphic. The same conclusion holds for $M_3$. In particular, $M_2$ and $M_3$ are representable; see \cite[Proposition 5.1.2]{Ox}. Now applying Proposition \ref{pr21}(v) we get $h_{i,1}\leq h_{i,s_i-1}$ for $i=2,3$. The equalities are now forced by the relation $h_{1,1}=h_{1,s_1-1}$.
\end{proof}

\begin{proof}[Proof of Theorem \ref{th12}]
 (i)$\Rightarrow$(ii)$\Rightarrow$(iii) is trivial.

(iii)$\Rightarrow$(iv): By Proposition \ref{pr22}(i), (v) and Lemma \ref{lm41} we may assume that $M$ is a simple, connected, and parallel irreducible. Let $r=\rank M$. If $r=1$, then $M$ is a coloop. Suppose $r\geq 2$. We prove by induction on $r$ that $M\cong U_{r,r+1}$.

If $r=2$, then the $h$-polynomial of $M$ is $h_M(t)=t^2+t$. This can be the case only when $M\cong U_{2,3}$; see \cite[Theorem 3.1(1b)]{Br3}. Assume now that $r\geq 3$. Since $M$ is connected, $h_{r-1}\ne 0$ by Proposition \ref{pr21}(iii). So by assumption, $\beta(M)=h_{r-1}=h_0=1$. It follows from Proposition \ref{pr23} that $M=M(G)$, where $G$ is a simple, $2$-connected graphical series-parallel network. Note that $G$ has a vertex $v$ of degree 2. Let $e_1,e_2$ be the two incident edges of $v$. By Proposition \ref{pr21}(iv),
\begin{equation}\label{eq41}
h_{M}(t)=h_{M-e_1}(t)+h_{\overline{M/e_1}}(t),
\end{equation}
where $\overline{M/e_1}$ is the simplification of $M/e_1.$ Since $\rank(M-e_1)=r$ and $\rank(\overline{M/e_1})=r-1$ \cite[Proposition 2.4]{Br}, we may write
$$\begin{aligned}
h_{M-e_1}(t)&=h'_0t^{r}+h'_1t^{r-1}+\cdots+h'_{r-1}t,\\
h_{\overline{M/e_1}}(t)&=h''_0t^{r-1}+h''_1t^{r-2}+\cdots+h''_{r-2}t.
\end{aligned}$$
We will show that the matroid $\overline{M/e_1}$ satisfies the induction hypothesis. First of all, $\overline{M/e_1}$ is simple by definition. The remaining conditions will be shown through the following claims.

\begin{claim}\label{cl1}
$\overline{M/e_1}$ is connected.
\end{claim}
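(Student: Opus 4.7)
The plan is to deduce Claim \ref{cl1} directly from Proposition \ref{pr22}(iv), exploiting the parallel irreducibility of $M$ that was set up at the outset of the argument. Recall that at the beginning of the proof of (iii)$\Rightarrow$(iv) we reduced, via Lemma \ref{lm41} together with Proposition \ref{pr22}(i), (v), to the case in which $M$ is simple, connected, and parallel irreducible. Parallel irreducibility says precisely that $M$ admits no nontrivial decomposition as a parallel connection of two smaller matroids; in particular, $M$ is not a parallel connection relative to any single element of its ground set, and in particular not relative to $e_1$.

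Before invoking Proposition \ref{pr22}(iv), I would briefly verify its hypotheses in order to speak meaningfully of ``a parallel connection relative to $e_1$'': $M$ is connected by assumption; $e_1$ is not a loop, because $M$ is simple; and $e_1$ is not a coloop, because a coloop in a connected matroid with at least two elements would split off as a direct summand, contradicting connectedness (here $|E(M)|\geq r+1\geq 4$). Proposition \ref{pr22}(iv) then asserts that, for the connected matroid $M$, the simplification $\overline{M/e_1}$ is separable if and only if $M$ is a parallel connection relative to $e_1$. Since the latter has been ruled out, $\overline{M/e_1}$ is not separable, that is, it is connected, which is exactly the claim.

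There is essentially no obstacle in this particular step: parallel irreducibility was engineered precisely to preclude the only way in which $\overline{M/e_1}$ could be separable, and Proposition \ref{pr22}(iv) packages the combinatorial content very cleanly. The genuine work will come in the claims that follow, where one presumably must show that $\overline{M/e_1}$ is itself simple and parallel irreducible, and that its $h$-vector inherits the symmetries $h_0=h_s$ and $h_1=h_{s-1}$, so that the induction hypothesis yields $\overline{M/e_1}\cong U_{r-1,r}$; the identification $M\cong U_{r,r+1}$ must then be reconstructed using the recursion \eqref{eq41} together with the structural information about the degree-$2$ vertex $v$ and the edges $e_1,e_2$ in the graph $G$.
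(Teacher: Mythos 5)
Your argument is correct and is exactly the paper's proof, which simply cites Proposition \ref{pr22}(iv) together with the standing assumption that $M$ is parallel irreducible. You have merely spelled out the routine checks (that $e_1$ is neither a loop nor a coloop, so that the notion of a parallel connection relative to $e_1$ makes sense) that the paper leaves implicit.
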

This follows from Proposition \ref{pr22}(iv) and the assumption that $M$ is parallel irreducible.

\begin{claim}\label{cl2}
$h''_{r-2}=h''_0=1$.
\end{claim}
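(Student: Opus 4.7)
The plan is to extract the two coefficients $h''_0$ and $h''_{r-2}$ directly from the recursion \eqref{eq41}. The first one, $h''_0=1$, is immediate: Proposition \ref{pr21}(i) says that the leading coefficient of the $h$-polynomial of any simple matroid is $1$. So all the work lies in identifying $h''_{r-2}$, the coefficient of $t$ in $h_{\overline{M/e_1}}(t)$.

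The key idea is to compare coefficients of $t$ in \eqref{eq41}. On the left, the coefficient of $t$ in $h_M(t)$ is $h_{r-1}$, which by hypothesis equals $h_0=1$ (so in particular $\beta(M)=1$ by Proposition \ref{pr21}(i)). On the right, it is $h'_{r-1}+h''_{r-2}$. Thus
\[
h'_{r-1}+h''_{r-2}=1.
\]
Since $\overline{M/e_1}$ is connected by Claim \ref{cl1}, Proposition \ref{pr21}(i) and (iii) give $h''_{r-2}=\beta(\overline{M/e_1})\geq 1$. To finish I need $h'_{r-1}=0$.

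For this, I will invoke Proposition \ref{pr21}(iv): because $\beta(M)=1$ and $e_1$ is not a coloop (the block $G$ has no bridges, so no edge is a coloop in $M$), exactly one of $M-e_1$ and $\overline{M/e_1}$ is connected. Since $\overline{M/e_1}$ already is connected, $M-e_1$ must have at least two connected components. Applying Proposition \ref{pr21}(iii) to $M-e_1$ (of rank $r$), the smallest index $j$ with $h'_{r-j}\neq 0$ is at least $2$, so $h'_{r-1}=0$. Combined with the displayed equation above this yields $h''_{r-2}=1$, completing the claim.

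The step I expect to be the subtle one is the appeal to the ``exactly one'' part of Proposition \ref{pr21}(iv); everything else is a routine bookkeeping of coefficients. One has to be sure that $e_1$ satisfies the hypothesis of that proposition (not a coloop), which is where the fact that $G$ is a block of more than one edge enters.
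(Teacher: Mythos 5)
Your proof is correct and rests on the same four ingredients as the paper's: Proposition \ref{pr21}(i) for $h''_0=1$, connectivity of $\overline{M/e_1}$ (Claim \ref{cl1}) for $h''_{r-2}\geq 1$, the recursion \eqref{eq41}, and the hypothesis giving $h_{r-1}=h_0=1$. The only divergence is at the final step: the paper simply reads off $h''_{r-2}\leq h_{r-1}=1$ from \eqref{eq41}, which implicitly uses nonnegativity of the $h$-vector of the shellable complex $BC(M-e_1)$; you instead establish $h'_{r-1}=0$ outright by invoking the ``exactly one'' clause of Proposition \ref{pr21}(iv) to see that $M-e_1$ must be disconnected, then applying Proposition \ref{pr21}(iii). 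This detour is valid and, as a side benefit, already yields the separability of $M-e_1$, a fact the paper only deduces afterward (at the start of the proof of Claim \ref{cl3}) from the equality $h'_{r-1}=0$. Both routes close the argument the same way.
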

Since $\overline{M/e_1}$ is connected, $h''_{r-2}\ne 0$. On the other hand, $h''_{r-2}\leq h_{r-1}=1$ by \eqref{eq41}. Hence, $h''_{r-2}=1$.

\begin{claim}\label{cl3}
$h''_1= h''_{r-3}.$
\end{claim}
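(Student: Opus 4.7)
The plan is to prove $h''_1 = h''_{r-3}$ by establishing both inequalities, exploiting representability of $M''$ together with the identity \eqref{eq41} and the hypothesis $h_1 = h_{r-2}$.

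First, I would show that parallel irreducibility forces $u_1 u_2 \notin E(G)$: if instead $f_0 := u_1 u_2 \in E(G)$, then $\{e_1,e_2,f_0\}$ is a triangle, and contracting $f_0$ makes $\{e_1,e_2\}$ a parallel class in $M/f_0$. After simplification, $v$ carries only a single edge, which is a bridge and hence a coloop in $\overline{M/f_0}$. Since $\overline{M/f_0}$ has $n - 2 \ge 2$ elements (as $n \ge r + 1 \ge 4$ for $r \ge 3$), the coloop forces $\overline{M/f_0}$ to be separable, and Proposition \ref{pr22}(iv) then gives that $M$ is a parallel connection at $f_0$, contradicting parallel irreducibility. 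Consequently $G/e_1$ is simple with $n - 1$ edges, $M'' = M(G/e_1)$ has $n - 1$ elements, and Proposition \ref{pr21}(i) yields $h''_1 = (n - 1) - (r - 1) = n - r = h_1$.

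Next, Proposition \ref{pr21}(v) applied to the representable matroid $M''$ (a minor of the graphic matroid $M$), combined with Claim \ref{cl2}, gives the partial-sum bound $h''_0 + h''_1 \le h''_{r-2} + h''_{r-3}$, hence $h''_1 \le h''_{r-3}$. For the reverse inequality, observe that $e_2$ is a coloop of $M - e_1$ (since $v$ becomes a leaf in $G - e_1$), so $M - e_1 = N \oplus \{e_2\}$ with $N := M(G - v)$ and $h_{M-e_1}(t) = t \cdot h_N(t)$. Writing the $h$-vector of $N$ as $(g_0,\ldots,g_{r-1})$, equation \eqref{eq41} becomes the coefficient identity $h_j = g_j + h''_{j-1}$ for $1 \le j \le r - 1$. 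At $j = r - 2$, the hypothesis $h_{r-2} = h_1$ together with $h_1 = h''_1$ from the first paragraph yield $h''_{r-3} = h_{r-2} - g_{r-2} = h''_1 - g_{r-2} \le h''_1$, since $g_{r-2} \ge 0$. Combined with the first inequality, this gives $h''_1 = h''_{r-3}$.

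The main obstacle is the reduction $u_1 u_2 \notin E(G)$: it is the linchpin that forces $h''_1 = h_1$ and thereby turns Proposition \ref{pr21}(v) into a sharp bound. Once this structural fact is in hand, the rest is a routine comparison of the coefficients of \eqref{eq41}.
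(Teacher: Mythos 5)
Your proof is correct, and it is a genuine (if modest) reorganization of the paper's argument. The paper first bounds $|\overline{M/e_1}|\geq |M|-2$ (hence $h''_1\geq h_1-1$) by passing through the series-connection structure of $M$ at $e_1$ (Proposition~\ref{pr22}(ii) and~(iv)), then sandwiches
$h_1-1\leq h''_1\leq h''_{r-3}\leq h_{r-2}=h_1$
using Proposition~\ref{pr21}(v) and \eqref{eq41}, and only at the end invokes parallel irreducibility to exclude the extremal case $h''_1=h_1-1$. You instead establish the underlying structural fact $u_1u_2\notin E(G)$ directly and upfront (via the coloop in $\overline{M/f_0}$ and Proposition~\ref{pr22}(iv)), which yields $M/e_1$ simple and therefore $h''_1=h_1$ immediately; the remaining comparison $h''_1\leq h''_{r-3}\leq h_{r-2}=h_1=h''_1$ then follows from Proposition~\ref{pr21}(v) and \eqref{eq41} exactly as in the paper. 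The ingredients (Proposition~\ref{pr21}(v), \eqref{eq41}, the hypothesis $h_1=h_{r-2}$, parallel irreducibility) are the same; you avoid the intermediate series-connection detour and the resulting case analysis, which is arguably cleaner. One small notational point: you should say explicitly that $u_1,u_2$ denote the endpoints of $e_1,e_2$ other than $v$; and the count ``$\overline{M/f_0}$ has $n-2$ elements'' should really be ``at most $n-2$,'' though all you need is that it has at least $2$ (which holds since, after contracting $f_0$ and simplifying, the graph still has $r\geq 3$ vertices, hence at least $r-1\geq 2$ edges).
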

We have $h'_{r-1}=h_{r-1}-h''_{r-2}=0$, thus $M-e_1$ is separable by Proposition \ref{pr21}(iii). It then follows from Proposition \ref{pr22}(iv) that $M$ is a series connection relative to $e_1$. Hence $M$ has at most one 3-element circuit containing $e_1$ by Proposition \ref{pr22}(ii). This implies that $|\overline{M/e_1}|\geq |M/e_1|-1=|M|-2$ since the contraction $M/e_1$ contains at most one 2-element circuit. Now by Proposition \ref{pr21}(i),
$$ h''_1=|\overline{M/e_1}|-\rank(\overline{M/e_1})\geq (|M|-2)-(r-1)=h_1-1.$$
 On the other hand, from Proposition \ref{pr21}(v) we have $h''_0+h''_1\leq h''_{r-2}+ h''_{r-3}$, which implies $h''_1\leq  h''_{r-3}$ by Claim \ref{cl2}. Moreover, it follows from \eqref{eq41} that $h''_{r-3}\leq h''_{r-3}+h'_{r-2}= h_{r-2}$. Thus we have shown
$$h_1-1\leq  h''_1\leq h''_{r-3}\leq h_{r-2}=h_1.$$
Therefore, the equality $h''_1= h''_{r-3}$ will follow if we prove that $h''_1\ne h_1-1$. Indeed, if $h''_1= h_1-1$ then $M/e_1$ contains one 2-element circuit. This means that $M$ has one 3-element circuit $C$ which contains $e_1$. Note that every circuit of $M$ containing $e_1$ must also contain $e_2$ since the vertex $v$ has degree 2. So $C=\{e_1,e_2,e\}$ with $e\in M$. As $r\geq 3$, $M$ properly contains $C$. Now one may easily check that $M$ is the parallel connection of $M-\{e_1,e_2\}$ and $C$ relative to $e$. But this contradicts the assumption that $M$ is parallel irreducible.

 \begin{claim}
 $\overline{M/e_1}$ is parallel irreducible.
 \end{claim}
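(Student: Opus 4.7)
The plan is to argue by contradiction, working in the graph representation $M=M(G)$, where $G$ is a simple $2$-connected series-parallel graph with $v$ of degree $2$ and incident edges $e_1,e_2$ going to $u,w$ respectively. I expect the argument to split into two parts: first, ruling out the case $uw\in E(G)$; second, lifting any purported parallel decomposition of $\overline{M/e_1}$ back to one of $M$.

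For the first part, suppose $uw\in E(G)$. Then $\{e_1,e_2,uw\}$ is a triangle and $\{u,w\}$ is a $2$-vertex cut of $G$ separating $v$ from the rest of $G$, with the edge $uw$ lying between the two cut vertices. Writing $G=G_1\cup_{uw}G_2$, where $G_1$ is the triangle on $\{v,u,w\}$ (edges $e_1,e_2,uw$) and $G_2=G-v$, the $2$-connectedness of $G$ forces $uw$ to be a non-bridge in $G_2$ (otherwise one of $u,w$ would be a cut vertex of $G$), hence not a coloop of $M(G_2)$; combined with $|E(G_2)|\ge 2$, which follows from $r\ge 3$, this produces a valid non-trivial parallel connection $M=P(M(G_1),M(G_2))$ at $uw$, contradicting parallel irreducibility of $M$. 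Thus $uw\notin E(G)$, and the contracted graph $G/e_1$ is already simple, so $\overline{M/e_1}=M(G/e_1)$.

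For the second part, assume for contradiction that $M(G/e_1)$ is not parallel irreducible, so $G/e_1=H_1\cup_f H_2$ is a non-trivial $2$-sum at some edge $f$ with endpoints $a,b$ satisfying $V(H_1)\cap V(H_2)=\{a,b\}$ and $E(H_1)\cap E(H_2)=\{f\}$. The edge $e_2$, viewed as an edge of $G/e_1$ from $u$ (the image of the merged pair $\{u,v\}$) to $w$, lies in exactly one of $E(H_1),E(H_2)$; without loss of generality, say $e_2\in E(H_1)$. I would then form $H_1'$ from $H_1$ by adjoining $v$ as a new vertex, adjoining $e_1$ as an edge from $u$ to $v$, and rerouting $e_2$ to its original endpoints $v,w$. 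The key checks are that $V(H_1')\cap V(H_2)=\{a,b\}$ and $E(H_1')\cap E(H_2)=\{f\}$ (both holding because $v$ and $e_1$ are brand-new objects, hence absent from $H_2$), and that $f$ remains a non-coloop in $M(H_1')$ (because $H_1-f$ is connected and $v$ is tethered to it via $e_1$ and $e_2$). This yields $G=H_1'\cup_f H_2$, hence $M=P(M(H_1'),M(H_2))$, again contradicting parallel irreducibility of $M$.

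The main subtlety I expect is the case where the image vertex $u$ in $G/e_1$ coincides with one of the gluing vertices $a,b$: the uniform prescription of placing $v$, $e_1$, and the rerouted $e_2$ on the side of the decomposition already containing $e_2$ handles both the internal and the boundary subcases, because the decisive point is simply that the new objects $v$ and $e_1$ lie outside $H_2$.
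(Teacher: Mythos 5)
Your argument is correct but follows a genuinely different route from the paper's. The paper's proof reduces, via Propositions 2.2(iv) and 2.1(iv), to showing that $M/e_1-e$ is separable for every $e\in M/e_1$; assuming some $M/e_1-e$ is connected, a short chain of matroid identities forces $M-e_1$ to have at most two connected components, while the $h$-vector equalities extracted from the proof of Claim 3 (namely $h_1''=h_1$, whence $h_{r-3}''=h_{r-2}$ and so $h'_{r-2}=0$) force $M-e_1$ to have at least three components by Proposition 2.1(iii), a contradiction. You instead stay on the graph side throughout: your first part re-derives the fact that $M/e_1$ is simple (equivalently $uw\notin E(G)$), which the paper had already established inside the proof of Claim 3 in the form $\overline{M/e_1}=M/e_1$, and your second part uncontracts $e_1$ to lift a hypothetical parallel decomposition of $G/e_1$ to one of $G$. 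Your approach is more transparent and avoids the $h$-vector arithmetic entirely, but it silently invokes the fact that a parallel decomposition of the cycle matroid of a $2$-connected graph along an edge $f$ is induced by a $2$-vertex cut through the endpoints of $f$ -- this is what lets you write $G/e_1=H_1\cup_f H_2$ at the graph level and identify the two matroid factors with cycle matroids of the two pieces, and it deserves a justification or a citation. The paper's route is more computational but sidesteps any such representability considerations by never leaving the matroid and $h$-vector framework already in play for the rest of the theorem.
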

It follows from the proof of Claim \ref{cl3} that $\overline{M/e_1}=M/e_1$. Thus by Proposition \ref{pr22}(iv), we need to prove that $\overline{(M/e_1)/e}$ is connected for all $e\in M/e_1$. It then suffices to show that $M/e_1-e$ is separable for all $e\in M/e_1$, by Proposition \ref{pr21}(iv) and Claim \ref{cl1}. Suppose there exists $e\in M/e_1$ with $M/e_1-e$ connected. We have $M/e_1-e=(M-e)/e_1$; see \cite[Proposition 3.1.26]{Ox}. As $M$ is parallel irreducible, $\overline{M/e}$ is connected by Proposition \ref{pr22}(iv). So by Proposition \ref{pr21}(iv), $M-e$ is separable (recall that $\beta(M)=1$). Now since $(M-e)/e_1$ is connected, we must have $M-e=\{e_1\}\oplus M'$ with $M'$ connected. It follows that $M-\{e_1,e\}$ is connected, and hence $M-e_1$ has at most 2 connected components. We will show that this is impossible. Indeed, one sees from the proof of Claim \ref{cl3} (i.e. $h_1''=h_1$) that $h''_{r-3}=h_{r-2}$. This implies $h'_{r-2}=h_{r-2}-h''_{r-3}=0$. So by Proposition \ref{pr21}(iii), $M-
e_1$ must have at least 3 connected components, a contradiction.

Now we may use induction hypothesis to conclude that $M/e_1\cong U_{r-1,r}$. Since $M$ is connected, it follows that $M\cong U_{r,r+1}$.

(iv)$\Rightarrow$(i): We may assume that $M$ is connected. The cases $M$ is a coloop or $M\cong U_{m,m+1}$ are trivial. Now assume that $M=P(M',U_{m,m+1})$ is the parallel connection of two matroids $M'$ and $U_{m,m+1}$ relative to their only common point $e$, in which $M'$ admits an order $<$ on its ground set such that $BC(M',<)$ is a complete intersection. Then we have the following description for the set of circuits of $M$:
\begin{equation}\label{eq42}
\mathcal{C}(M)=\mathcal{C}(M')\cup\{C_{m+1}\}\cup \{C\cup C_{m+1}-e:C\in\mathcal{C}(M')\},
\end{equation}
 where $C_{m+1}$ is the unique circuit of the matroid $U_{m,m+1}$. Since $BC(M',<)$ is a complete intersection, the set $\mathbf{m}(BC(M',<))$ of minimal broken circuits of $(M',<)$ consists of disjoint elements; see \cite[Theorem 4.1]{LR}. Now let $\prec$ be an extension of $<$ to the ground set of $M$ such that $e=\min_\prec(U_{m,m+1})$. Then it can be easily deduced from \eqref{eq42} that $\mathbf{m}(BC(M,\prec))=\mathbf{m}(BC(M',<))\cup\{C_{m+1}-e\}$. Hence $\mathbf{m}(BC(M,\prec))$ also consists of disjoint elements, from which it follows that the broken circuit complex $BC(M,\prec)$ is a complete intersection, again by \cite[Theorem 4.1]{LR}.
\end{proof}

Finally, we prove Theorem \ref{th13}.

\begin{proof}[Proof of Theorem \ref{th13}]
(i)$\Rightarrow$(ii) is trivial. (iii)$\Rightarrow$(i) is also trivial because for any ordering $<$ of $\mathcal{A}$ the Stanley--Reisner ideal of the broken circuit complex $BC(M(\mathcal{A}),<)$ is an initial ideal of the Orlik--Terao ideal of $\mathcal{A}$; see \cite[Theorem 4]{PS}.

(ii)$\Rightarrow$(iii): When $\mathbf{C}(\mathcal{A})$ is Gorenstein, it is well-known that its $h$-vector is symmetric; see, e.g., \cite[p. 51]{S}. Since this is also the $h$-vector of broken circuit complexes of $M(\mathcal{A})$, Theorem \ref{th12} applies.
\end{proof}


\begin{thebibliography}{99}

\bibitem{B}
A. Bj\"{o}rner, \emph{The homology and shellability of matroids and geometric lattices}. In \emph{Matroid Applications}, 226--283, Encyclopedia Math. Appl. {\bf 40}, Cambridge University Press, Cambridge, 1992.

\bibitem{BZ}
A. Bj\"{o}rner and G. Ziegler, \emph{Broken circuit complexes: factorizations and generalizations}. J. Combin. Theory, Series B {\bf 51} (1991), no. 1, 96--126.

\bibitem{Br2}
T. Brylawski, \emph{A combinatorial model for series-parallel networks}. Trans. Amer. Math. Soc. {\bf 154} (1971), 1--22.

\bibitem{Br}
T. Brylawski, \emph{The broken-circuit complex}. Trans. Amer. Math. Soc. {\bf 234} (1977), 417--433.

\bibitem{Br3}
T. Brylawski, \emph{Connected matroids with the smallest Whitney numbers}.
Discrete Math. {\bf 18} (1977), no. 3, 243--252.

\bibitem{BrOx}
T. Brylawski and J. Oxley,
\emph{The broken-circuit complex: its structure and factorizations}.
European J. Combin. {\bf 2} (1981), no. 2, 107--121.

\bibitem{BrOx2}
T. Brylawski and J. Oxley,
\emph{The Tutte polynomial and its applications}. In \emph{Matroid Applications}, 123--225, Encyclopedia Math. Appl. {\bf 40}, Cambridge University Press, Cambridge, 1992.

\bibitem{CN}
R. C. Cowsik and M. V. Nori, \emph{On the fibres of blowing up}.
J. Indian Math. Soc. (N.S.) {\bf 40} (1976), no. 1-4, 217--222.

\bibitem{Cr}
H. Crapo, \emph{A higher invariant for matroids}. J. Combinatorial Theory {\bf 2} (1967), 406--417.

\bibitem{DGT}
G. Denham, M. Garrousian and  S. Tohaneanu, \emph{Modular decomposition of the Orlik--Terao algebra of a hyperplane arrangement}. To appear in Annals of Combinatorics.

\bibitem{Di}
G. A. Dirac, \emph{A property of 4-chromatic graphs and some remarks on critical graphs}.
J. London Math. Soc. {\bf 27} (1952), 85--92.


\bibitem{Ho}
M. Hochster, \emph{Cohen--Macaulay rings, combinatorics, and simplicial complexes}. In \emph{Ring Theory II} (Proc. Second Oklahoma Conference), 171--223, Dekker, New York, 1977.

\bibitem{LR}
D. V. Le and T. R\"{o}mer, \emph{Broken circuit complexes and hyperplane arrangements}. J. Algebraic Combin. {\bf 38} (2013), no. 4, 989--1016.

\bibitem{MT}
N. C. Minh and N. V. Trung, \emph{Cohen--Macaulayness of powers of two-dimensional squarefree monomial ideals}.
J. Algebra {\bf 322} (2009), no. 12, 4219--4227.

\bibitem{OT}
P. Orlik and  H. Terao, \emph{Arrangements of Hyperplanes}. Grundlehren Math. Wiss.,
Bd. {\bf 300}, Springer-Verlag, Berlin-Heidelberg-New York, 1992.

\bibitem{OT2}
P. Orlik and  H. Terao, \emph{Commutative algebras for arrangements}. Nagoya Math.
J. {\bf 134} (1994), 65--73.

\bibitem{Ox}
J. Oxley, \emph{Matroid theory, Second Edition}. Oxford Graduate Texts in Mathematics, vol. {\bf 21}, Oxford University Press, Oxford, 2011.


\bibitem{PS}
N. Proudfoot and D. Speyer, \emph{A broken circuit ring}. Beitr\"{a}ge Algebra Geom. {\bf 47} (2006), 161--166.

\bibitem{Pr}
J. S. Provan, \emph{Decompositions, shellings, and diameters of simplicial complexes and convex polyhedra}. Thesis, Cornell University, Ithaca, NY, 1977.

\bibitem{Sc}
H. Schenck, \emph{Resonance varieties via blowups of $\mathbb{P}^2$ and scrolls},
Int. Math. Res. Not. {\bf 20} (2011), 4756--4778.

\bibitem{Sc2}
H. Schenck, \emph{Hyperplane Arrangements: Computations and Conjectures}.
Arrangements of hyperplanes - Sapporo 2009, 323--358,
Adv. Stud. Pure Math., {\bf 62}, Math. Soc. Japan, Tokyo, 2012.

\bibitem{ST}
H. Schenck and S. Tohaneanu, \emph{The Orlik--Terao algebra and $2$-formality}. Math.
Res. Lett. {\bf 16} (2009), 171--182.

\bibitem{S2}
R. P. Stanley, \emph{Hilbert functions of graded algebras}. Adv. Math. {\bf 28} (1978), 57--83.

\bibitem{S1}
R. P. Stanley, \emph{On the Hilbert function of a graded Cohen--Macaulay domain}. J. Pure Appl. Algebra {\bf 73} (1991), no. 3, 307--314.

\bibitem{S}
R. P. Stanley, \emph{Combinatorics and Commutative Algebra, Second Edition}. Progress in Mathematics, vol. {\bf 41}, Birkh\"{a}user, 1996.

\bibitem{Sto}
E. Stokes, \emph{The $h$-vectors of matroids and the arithmetic degree of squarefree strongly stable ideals}. Ph.D. thesis, University of Kentucky, 2008.

\bibitem{Sw}
E. Swartz, \emph{$g$-elements of matroid complexes}.
J. Combin. Theory Ser. B {\bf 88} (2003), no. 2, 369--375.

\bibitem{TT}
N. Terai and N. V. Trung, \emph{Cohen--Macaulayness of large powers of Stanley--Reisner ideals}.
Adv. Math. {\bf 229} (2012), no. 2, 711--730.

\bibitem{TY}
N. Terai and K. Yoshida,  \emph{Locally complete intersection Stanley--Reisner ideals}. Illinois J. Math. {\bf 53} (2009), 413--429.

\bibitem{T}
H. Terao, \emph{Algebras generated by reciprocals of linear forms}. J. Algebra {\bf 250} (2002), 549--558.

\bibitem{Wh}
H. Whitney, \emph{A logical expansion in mathematics}. Bull. Amer. Math. Soc. {\bf 38} (1932), 572--579.

 \bibitem{W}
H. Wilf, \emph{Which polynomials are chromatic?}. Proc. 1973 Rome International Colloq.
Combinatorial Theory I, pp. 247--257, Accademia Nazionale dei Lincei, Rome, 1976.

\bibitem{Za}
T. Zaslavsky, \emph{The M\"{o}bius function and the characteristic polynomial}. In \emph{Combinatorial geometries}, 114--138, Encyclopedia Math. Appl. {\bf 29}, Cambridge University Press, Cambridge, 1987.
\end{thebibliography}
\end{document}